\documentclass[12pt]{article}

\usepackage{amsfonts,amssymb,amsmath,amscd,amsthm}
\usepackage{graphicx}
\usepackage{xcolor}
\usepackage{geometry}
\usepackage{enumerate}
\usepackage{fancyhdr}
\usepackage{soul}

%--------------------------------
\setlength{\topmargin}{0in}
\setlength{\textwidth}{6in}    %  8.5 - 1.375 - 1.125
\setlength{\oddsidemargin}{.4in}
\setlength{\evensidemargin}{.5in}
\setlength{\headheight}{26pt}
\setlength{\headsep}{5pt}
%-----------------------------------------------------------------------------

\usepackage{cite}

%\usepackage{ifpdf}
%\ifpdf
% \usepackage[colorlinks,final,hyperindex]{hyperref}
%\else
%\usepackage[colorlinks,final,hyperindex,hypertex]{hyperref}
%\fi

\usepackage{hyperref}
\hypersetup{
colorlinks   = true,
citecolor    = black,
linkcolor =black,
urlcolor=black,
}

\hypersetup{hidelinks}

%\usepackage{ifpdf}
%\ifpdf
% \usepackage[colorlinks,final,backref=page,hyperindex]{hyperref}
%\else
%\usepackage[colorlinks,final,backref=page,hyperindex,hypertex]{hyperref}
%\fi

%\usepackage[colorlinks,final,backref=page,hyperindex,hypertex]{hyperref}
%\usepackage[active]{srcltx} %SRC Specials for DVI Searching

%\usepackage[backref]{hyperref}
%\usepackage{cite}
%\usepackage{refcheck}
%\usepackage{showkeys}
%\usepackage{hyperref}

\usepackage{authblk}

\setlength{\affilsep}{1em}

\newcommand{\email}[1]{%
    \normalsize\href{mailto:#1}{\color{black}{#1} }}

\makeatletter
\newcommand{\subjclass}[2][2020]{%
  \let\@oldtitle\@title%
  \gdef\@title{\@oldtitle\footnotetext{#1 \emph{Mathematics subject classification}: #2}}%
}
\newcommand{\keywords}[1]{%
  \let\@@oldtitle\@title%
  \gdef\@title{\@@oldtitle\footnotetext{\emph{Keywords}: #1.}}%
}
\makeatother

\usepackage{enumitem}  % http://ctan.org/pkg/enumitem
\usepackage{calc}% http://ctan.org/pkg/calc
% needed for setting standard space after item labels of numbered lists itemize, enumerate, etc.
\setlist{labelindent=1pt,itemsep=.5em}
% command in enmitem package setting standard space after item labels of numbered lists itemize, enumerate, etc.
\setlist[itemize]{leftmargin=1.2cm}
\setlist[enumerate]{itemindent=0em,leftmargin=1.2cm}
\setlist[enumerate,1]{label={\upshape(\roman*)}}

\allowdisplaybreaks

\def\figurename{Figure} % Replace the colon that normally appears after the Figure number by a period.
\makeatletter
\renewcommand{\fnum@figure}[1]{\figurename~\thefigure.}
\makeatother

\def\tablename{Table} % Replace the colon that normally appears after the Figure number by a period.
\makeatletter
\renewcommand{\fnum@table}[1]{\tablename~\thetable.}
\makeatother

\newtheorem{theorem}{Theorem}[section]

\newtheorem{corollary}[theorem]{Corollary}
\newtheorem{proposition}[theorem]{Proposition}
\theoremstyle{definition}
\newtheorem{definition}[theorem]{Definition}

\theoremstyle{remark}
\newtheorem{remark}[theorem]{Remark}

\newcommand{\qedflrght}{\tag*{$\square$}}

\numberwithin{equation}{section}

\title{Hom-prealternative superalgebras}

\author[1]{Ibrahima Bakayoko}
\author[2]{Sergei Silvestrov}
\affil[1]{\Affilfont D\'epartement de Math\'ematiques, Universit\'e de N'Z\'er\'ekor\'e,
\authorcr \Affilfont
BP 50 N'Z\'er\'ekor\'e, Guin\'ee.
\authorcr \Affilfont
\email{ibrahimabakayoko27@gmail.com}}
\affil[2]{\Affilfont Division of Mathematics and Physics,
School of Education, Culture and Communication,
M\"{a}lardalen University, \authorcr
\Affilfont Box 883, 72123 V{\"a}ster{\aa}s, Sweden.
\authorcr \Affilfont
\email{sergei.silvestrov@mdh.se}}

\subjclass[2020]{17D30, 17B61, 17B60, 17B62}
\keywords{Hom-prealternative superalgebra, Hom-alternative algebra, bimodule}

\date{\today}

\begin{document}

\maketitle

\abstract{
The purpose of this paper is to introduce Hom-prealternative superalgebras and their bimodules.
Some constructions of Hom-prealternative superalgebras and Hom-alternative superalgebras are given,
and their connection with Hom-alternative superalgebras are studied.
Bimodules over Hom-prealterna\-tive superalgebras are introduced, relations between bimodules over Hom-prealterna\-tive superalgebras and the bimodules of the corresponding Hom-alternative superalgebras are considered, and construction of bimodules over Hom-prealterna\-tive superalgebras by twisting is described.
}

\footnote[0]{{\it Corresponding author}: Ibrahima Bakayoko, Sergei Silvestrov}

\section{Introduction}
Hom-Lie algebras and more general quasi-Hom-Lie algebras were introduced first by Hartwig, Larsson and Silvestrov in  \cite{HartwigLarSil:defLiesigmaderiv} where a general approach to discretization of Lie algebras of vector fields using general twisted derivations ($\sigma$-deriva\-tions) and a general method for construction of deformations of Witt and Virasoro type algebras based on twisted derivations have been developed. The general quasi-Lie algebras, containing the quasi-Hom-Lie algebras and Hom-Lie algebras as subclasses, as well their graded color generalization, the color quasi-Lie algebras including color quasi-hom-Lie algebras, color hom-Lie algebras and their special subclasses the quasi-Hom-Lie superalgebras and hom-Lie superalgebras, have been first introduced in \cite{HartwigLarSil:defLiesigmaderiv,LarssonSilvJA2005:QuasiHomLieCentExt2cocyid,LarssonSilv:quasiLiealg,LSGradedquasiLiealg,LarssonSilv:quasidefsl2,SigSilv:CzechJP2006:GradedquasiLiealgWitt}.
Subsequently, various classes of Hom-Lie admissible algebras have been considered in \cite{ms:homstructure}. In particular, in \cite{ms:homstructure}, the Hom-associative algebras have been introduced and shown to be Hom-Lie admissible, that is leading to Hom-Lie algebras using commutator map as new product, and in this sense constituting a natural generalization of associative algebras as Lie admissible algebras leading to Lie algebras using commutator map. Furthermore, in \cite{ms:homstructure}, more general $G$-Hom-associative algebras including Hom-associative algebras, Hom-Vinberg algebras (Hom-left symmetric algebras), Hom-pre-Lie algebras (Hom-right symmetric algebras), and some other Hom-algebra structures, generalizing $G$-associative algebras, Vinberg and pre-Lie algebras respectively, have been introduced and shown to be Hom-Lie admissible, meaning that for these classes of Hom-algebras, the operation of taking commutator leads to Hom-Lie algebras as well. Also, flexible Hom-algebras have been introduced, connections to Hom-algebra generalizations of derivations and of adjoint maps have been noticed, and some low-dimensional Hom-Lie algebras have been described.
Since the pioneering works \cite{HartwigLarSil:defLiesigmaderiv,LarssonSilvJA2005:QuasiHomLieCentExt2cocyid,LarssonSilv:quasiLiealg,LSGradedquasiLiealg,LarssonSilv:quasidefsl2,ms:homstructure}, Hom-algebra structures have developed in a popular broad area with increasing number of publications in various directions. Hom-algebra structures are very useful since Hom-algebra structures of a given type include their classical counterparts and open broad possibilities for deformations, Hom-algebra extensions of homology and cohomology structures and representations, formal deformations of Hom-associative and Hom-Lie algebras, Hom-Lie admissible Hom-coalgebras, Hom-coalgebras, Hom-bialgebras and Hom-Hopf algebras,  \cite{
AmmarEjbehiMakhlouf:homdeformation,
BenMakh:Hombiliform,
ElchingerLundMakhSilv:BracktausigmaderivWittVir,
LarssonSilvJA2005:QuasiHomLieCentExt2cocyid,
LarssonSilvestrovGLTMPBSpr2009:GenNComplTwistDer,
MakSil:HomLieAdmissibleHomCoalgHomHopf,MakhSilv:HomAlgHomCoalg,
MakhSilv:HomDeform,
Sheng:homrep,
Yau:HomolHom,
Yau:EnvLieAlg}.
Hom-Lie algebras, Hom-Lie superalgebras, color Hom-Lie algebras, Hom-associative color algebras, Enveloping algebras of color Hom-Lie algebras,
color Hom-Leibniz algebras, omni-Hom-Lie algebras, color omni-Hom-Lie algebras, biHom-Lie algebras, biHomassociative algebras,  biHom-Frobenius algebras, Hom-Ore extensions Hom-algebras, Hom-alternative algebras, Hom-center-symmetric algebras, Hom-left-symmetric color dialgebras, Hom-dendriform algebras, Rota–Baxter Hom-algebras, Hom-tridendriform color algebras, Hom-Malcev algebras, Hom-Jordan algebras, Hom-Poisson algebras,  Color Hom-Poisson algebras, Hom-Akivis algebras, Hom-Lie-Yamaguti algebras, nearly Hom-associative algebras, Hom-Gerstenhaber algebras and Hom-Lie algebroids, $n$-Lie algebras and Hom-Nambu-Lie algebras and other $n$-ary Hom-algebra structures have been further investigated in various aspects for example in
\cite{AbAmMakh:HomaltHomMalcHomJordSuperalg,
AbAmMakh:RotaBaxterOpPreLiesuperalg,
AbdaouiAmmarMakhloufCohhomLiecolalg2015,
AbramovSilvestrov:3homLiealgsigmaderivINvol,
AmAyMabMakh:QuadrColHomLieAlgs,
AmmarEjbehiMakhlouf:homdeformation,
AmmarMakhloufHomLieSupAlg2010,
AmmarMabroukMakhloufCohomnaryHNLalg2011,
AmmarMakhloufSaadaoui2013:CohlgHomLiesupqdefWittSup,
AmmarMakhloufSilv:TernaryqVirasoroHomNambuLie,
AragonPerinaCalderonMartin2012:gradedmatrixHomalg,
AragonPerinaCalderonMartin2012:SplitRegularHomLieAlgebras,
ArmakanFarhangdoost:IJGMMP,
ArmakanSilv:envelalgcertaintypescolorHomLie,
ArmakanSilvFarh:envelopalgcolhomLiealg,
ArmakanSilvFarh:exthomLiecoloralg,
ArmakanSilv:colorHomLieHomLiebnOmniHomLie,
AtMaSi:GenNambuAlg,
akms:ternary,
ams:ternary,
ArnlindMakhloufSilvnaryHomLieNambuJMP2011,
BI:LaplacehomLiequasibialg, BI:LmodcomodhomLiequasibialg,
Bakayoko2014:ModulescolorHomPoisson,
BakayokoBangoura2015LeftHomsymHomPoisalg,
BakayokoDialo2015:genHomalgebrastr,
Bakayoko2016:HomPostLiemodulesOop,
BakyokoSilvestrov:Homleftsymmetriccolordialgebras,
BakyokoSilvestrov:MultiplicnHomLiecoloralg,
BakayokoToure2019:genHomalgebrastr,
BeitesKaygorodovPopov2018:GenDermultnaryHomOmegacolalg,
BenMakh:Hombiliform,
BenAbdeljElhamdKaygorMakhl201920GenDernBiHomLiealg,
Back2018:HomassociativeOreextensions,Back2018:HomassOreextsweakunit,Back2018:HilbertbasisthmnonassHomassOreexts,
Back2018:HilbertbasisthmnonassHomassOreexts,Back2019:formaldefsquantplanesunivenvalgs,
Back2020:MultiparformaldefsternhomNambuLiealgs,Back2020:homassociativeWeylalgebras,
CalderonMartin2020:regHomalgadmmultbasis,
CaoChen2012:SplitregularhomLiecoloralg,
ChengSu2011:CohomUnivcentrextHomLeibalg,
DassoundoSilvestrov2021:NearlyHomass,
FregierGohr2010:OnHomTypealg,
FregierGohrSilvestrov2009:UnitalalgHomasssurjinjtwist,
GaparayiIssa2012:TwistedgenerLieYamagutialg,
GrMakMenPan:Bihom1,
GuoZhZheUEPBWHLieA,
HassanzadehShapiroSutlu:CyclichomolHomasal,
ElhamdadiMakhlouf:DeformHomAltHomMalcev,
GuanChenSun:HomLieSuperalgebras,
He:stronghomassociativity,
HeMaSiUnAlHomAss,
HounkonnouDasundo:homcentersymalgbialgPropConseq2016,
HounkonnouDasundo:homcentersymalgbialg2018,
HounkonnouHoundedjiSilvestrov:DoubleconstrbiHomFrobalg,
Issa:HomAkivisalgebras,
JinLi2008:HomLiealgstrsemisimpleLiealg,
KitouniMakhloufSilvestrov:nhominduced,
kms:solvnilpnhomlie2020,
kms:narygenBiHomLieBiHomassalgebras2020,
LarssonSigSilvJGLTA2008:QuasiLiedefFttN,
LarssonSigSilvJGLTA2008,
LarssonSilvJA2005:QuasiHomLieCentExt2cocyid,
MaZheng:RotaBaxtMonoidalHomAlg,
MabroukNcibSilvestrov2020:GenDerRotaBaxterOpsnaryHomNambuSuperalgs,
Makhlouf2010:ParadigmnonassHomalgHomsuper,
MakhloufHomdemdoformRotaBaxterHomalg2011,
ms:homstructure,MakhSilv:HomDeform,
MakSil:HomLieAdmissibleHomCoalgHomHopf,MakhSilv:HomAlgHomCoalg,
MakSil:HomLieAdmissibleHomCoalgHomHopf,
MakYau:RotaBaxterHomLieadmis,
MandalMishra:HomGerstenhaberHomLiealgebroids,
MishraSilvestrov:SpringerAAS2020HomGerstenhalgsHomLiealgds,
OngongaRichterSilvestrov2019:clas3dimHomLiealg,
OngongaRichterSilvestrov2019:HomLiestr3dimskewsymalg,
OngongaRichterSilvestrov2020:clas4dimHomLiealg,
OngongaRichterSilvestrov2020:claslowdimHomLiealg,
RemmGoze2017:algvarietyHomLiealg,
RichardSilvestrovJA2008,
RichardSilvestrovGLTbdSpringer2009,
SaadaouSilvestrov:lmgderivationsBiHomLiealgebras,
ShengBai2014:homLiebialg,ShengChen2013:HomLie2algebras,
Sheng:homrep,ShengXiong:LMLA2015:OnHomLiealg,
SigSilv:GLTbdSpringer2009,
SigSilv:CzechJP2006:GradedquasiLiealgWitt,
SilvestrovParadigmQLieQhomLie2007,
SilvestrovZardeh2021:HNNextinvolmultHomLiealg,
WangZhangWei2015:HomLeibnizsuperalg,
Yau2009:HomYangBaxterHomLiequasitring,
Yau:EnvLieAlg,
Yau:HomolHom,
Yau:NoncomHomPoisalg,
Yau:HombialgHomcomodHomalg,
Yuan2012:HomLiecoloralgstr,
YauGenCom,
YauHomNambuLie,
Yau:HomaltHomJord,
ZhouChenMa:GenDerHomLiesuper}.
In particular, Color Hom-Poisson algebras \cite{Bakayoko2014:ModulescolorHomPoisson} and modules over some color Hom-algebras \cite{BakayokoDialo2015:genHomalgebrastr}, under the name of generalized Hom-algebras, have been considered. When the grading abelian group is $\mathbb{Z}_2$, the corresponding $\mathbb{Z}_2$-graded Hom-algebras are called
 Hom-superalgebras. Hom-Lie superalgebra structures such as Hom-Lie superalgebras and Hom-Lie admissible superalgebras \cite{ElhamdadiMakhlouf:DeformHomAltHomMalcev},
Rota-Baxter operator on pre-Lie superalgebras \cite{AbAmMakh:RotaBaxterOpPreLiesuperalg}, Hom-Novikov superalgebras \cite{SunB:HomNovikovsuperalg} have been considered in more details.
Hom-alternative superalgebras have been considered in \cite{AbAmMakh:HomaltHomMalcHomJordSuperalg} as a $\mathbb{Z}_2$-graded version of Hom-alternative algebras \cite{Makhl:HomaltHomJord} and their
relationships with Hom-Malcev superalgebras and Hom-Jordan superalgebras are established \cite{AbAmMakh:HomaltHomMalcHomJordSuperalg}.

The aim of this paper is to study the $\mathbb{Z}_2$-graded version of Hom-prealter\-native algebras and their bimodules. In Section~\ref{sec2:homprealtalgbimod}, we recall some basic notions on Hom-alternative superalgebras and their bimodules. We prove that bimodules over Hom-alternative
superalgebras are closed under twisting and direct product.
We show that the tensor product of super-commutative Hom-associative superalgebras and Hom-alternative superalgebras is also a Hom-alternative superalgebra.
Then we recall the definition of Hom-Jordan superalgebra.
Section~\ref{sec3:homprealtsuperalg} is devoted to Hom-prealternative superalgebras and Hom-alternative superalgebras and their connections. We point out that to any Hom-prealternative
superalgebra one may associate a Hom-alternative superalgebra, and conversly to any Hom-alternative superalgebra it corresponds a Hom-prealternative superalgebra via an $\mathcal{O}$-operator. Construction of Hom-prealternative superalgebras by composition is given.
Bimodules over Hom-prealternative superalgebras are introduced, relations between bimodules over Hom-prealternative superalgebras and bimodules of the corresponding Hom-alternative superalgebras are considered, and a construction of bimodules over Hom-prealternative superalgebras by twisting is described.

\section{Hom-prealternative algebras and bimodules} \label{sec2:homprealtalgbimod}
In this section, we present important basic notions and provide some construction results for Hom-alternative superalgebras.

Firstly, let us recall necessary important basic notions and notations on graded spaces and algebras. Throughout this paper, all linear spaces are assumed to be over a field $\mathbb{K}$ of characteristic different from $2$.

\begin{definition} Let $G$ be an abelian group. A linear space $V$ is called $G$-graded if $V=\bigoplus\limits_{a\in G} V_a$ for some family $(V_a)_{a\in G}$ of linear subspaces of $V$.
 \begin{enumerate}[label=\upshape{(\roman*)},left=7pt]
\item
%\index{Homogeneous element}
An element $x\in V$ is said to be homogeneous of degree $a\in G$ if $x\in V_a$, and  $\mathcal{H}(V)= \bigcup\limits_{a\in G} V_a$ denotes the set of all homogeneous elements
in $V$.
\item Let $V=\bigoplus\limits_{a\in G} V_a$ and $V'=\bigoplus\limits_{a\in G} V'_a$ be two $G$-graded linear spaces. A linear mapping $f : V\rightarrow V'$ is said
to be homogeneous of degree $b$ if
$$f(V_a)\subseteq  V'_{a+b}, \quad \text{ for all }\quad a\in G.$$
If, $f$ is homogeneous of degree zero i.e. $f(V_a)\subseteq V'_{a}$ holds for any $a\in G$, then $f$ is said to be even.
\item %\index{Algebra!graded}
An algebra $(A, \cdot)$ is said to be $G$-graded if its underlying linear space is $G$-graded i.e. $A=\bigoplus\limits_{a\in G}A_a$, and if furthermore
$$A_a\cdot A_b\subseteq A_{a+b}, \quad \text{ for all } \quad a, b\in G.$$
\item % \index{Morphism!graded algebras}
A morphism $f : A\rightarrow A'$
of $G$-graded algebras $A$ and $A'$
is by definition an algebra morphism from $A$ to $A'$, which is moreover an even mapping.
  \end{enumerate}
\end{definition}

Let $A$ be a $\mathbb{Z}_2$-graded linear space with direct sum $A=A_0\oplus A_1$. The elements of $A_j$, are said to be
homogeneous of degree (parity) $j\in\mathbb{Z}_2$. The set of all homogeneous elements of $A$ is $\mathcal{H}(A)=A_0\cup A_1$. Usually $|x|$ denotes parity of a homogeneous element $x\in \mathcal{H}(A)$.
%Throughout this paper, for simplicity of notation, the degree (parity) of a homogeneous element $x$ is denoted also by $x$ instead of $|x|$, causing no confusion when used as
%$(-1)^{|x|}=(-1)^{x}$.

\begin{definition}
Hom-superalgebras are triples $(A, \mu, \alpha)$ in which $A=A_0\oplus A_1$ is a $\mathbb{Z}_2$-graded linear space ($\mathbb{K}$-superspace),
 $\mu : A\times A\rightarrow A$ is an even bilinear map, and $\alpha : A\rightarrow A$ is an even linear map.
\begin{enumerate}[label=\upshape{(\roman*)},left=7pt]
\item Let $(A, \mu, \alpha)$ be a Hom-superalgebra. Hom-associator of $A$ is the even trilinear map $as_{\alpha,\mu}: A\times A\times A\rightarrow A$ given by  $$as_{\alpha,\mu}=\mu\circ(\mu\otimes\alpha-\alpha\otimes\mu).$$
In terms of elements, the map $as_{\alpha,\mu}$ is given by
$$as_{\alpha,\mu}(x, y, z)=\mu(\mu(x, y), \alpha(z))-\mu(\alpha(x), \mu(y, z)),$$
or in usual juxtaposition notation $xy=\mu(x, y)$,
$$as_{\alpha,\mu}(x, y, z)=(xy)\alpha(z)-\alpha(x)(yz).$$
\item An even linear map $f : (A, \mu, \alpha) \rightarrow (A', \mu', \alpha')$ is said to be a weak morphism of Hom-superalgebras if
$$f\circ\mu=\mu\circ(f\otimes f),$$
and a morphism of Hom-superalgebras if moreover $f\circ\alpha=\alpha'\circ f$.
\item Hom-superalgebra $(A, \mu, \alpha)$ in which $\alpha : A\rightarrow A$ is moreover an endomorphism of the algebra structure $\mu$ is said to be multiplicative, and the algebra endomorphism condition
\begin{equation}
\alpha\circ\mu=\mu\circ(\alpha\otimes\alpha)
\label{BSprealt:multiplicativityalpha}
\end{equation}
is called the multiplicativity of $\alpha$ with respect to $\mu$.
\end{enumerate}
\end{definition}

Since the grading degree of Hom-associator $|as_{\alpha,\mu}(x,y,z))| =  |x|+|y|+|z|$ for $x, y, z \in \mathcal{H}(A)=A_0\cup A_1$ in any Hom-superalgebra $(A=A_0\oplus A_1,\mu, \alpha),$
\begin{align}
as_{\alpha,\mu}(A_0, A_0, A_0)\subseteq A_0, \\
as_{\alpha,\mu}(A_1, A_0, A_0)\subseteq A_1, \\
as_{\alpha,\mu}(A_0, A_1, A_0)\subseteq A_1, \\
as_{\alpha,\mu}(A_0, A_0, A_1)\subseteq A_1, \\
as_{\alpha,\mu}(A_1, A_1, A_0)\subseteq A_0, \\
as_{\alpha,\mu}(A_1, A_0, A_1)\subseteq A_0, \\
as_{\alpha,\mu}(A_0, A_1, A_1)\subseteq A_0, \\
as_{\alpha,\mu}(A_1, A_1, A_1)\subseteq A_1.
\end{align}

\begin{definition}
Hom-associative superalgebras are those Hom-superalgebras $(A=A_0\oplus A_1, \bullet, \alpha)$ obeying super ($\mathbb{Z}_2$-graded) Hom-associativity super identity,
\begin{align}
&\forall \ x, y, z\in\mathcal{H}(A)=A_0\cup A_1: & \nonumber \\
& \quad \quad as_{\alpha,\bullet}(x, y, z)=0, &  \mbox{(super Hom-associativity)} \label{superHomassociativity}
\end{align}
equivalent in juxtaposition notation $x\bullet y=\bullet(x,y)$ to
$$(x\bullet y)\bullet\alpha(z)= \alpha(x)\bullet(y\bullet z).$$
\end{definition}
Hom-associativity super identity for Hom-superalgebras is equivalent to
\begin{equation}
as_{\alpha,\bullet}(A_i, A_j, A_k)=\{0_A\}, \quad i, j, k \in \mathbb{Z}_2.
\end{equation}

\begin{definition}
Left Hom-alternative superalgebras are Hom-superalgebras $(A=A_0\oplus A_1, \bullet, \alpha)$ obeying the left Hom-alternative super identity,
\begin{align}
&\forall \ x, y, z\in\mathcal{H}(A)=A_0\cup A_1:  \nonumber \\
& \quad \quad as_{\alpha,\bullet}(x, y, z)+(-1)^{|x||y|}as_{\alpha,\bullet}(y, x, z)=0,  \label{lefthomalternativesuperidentity}
\end{align}
equivalent in juxtaposition notation $x\bullet y=\bullet(x,y)$ to
$$(x\bullet y)\bullet\alpha(z)-\alpha(x)\bullet(y\bullet z) =
-(-1)^{|x||y|}( (y \bullet x)\bullet\alpha(z)-\alpha(y)\bullet(x\bullet z)).$$
For $(x, y, z)\in A_{|x|}\times A_{|y|} \times A_{|z|}$, $ |x|, |y|, |z| \in \mathbb{Z}_2$,
the left super Hom-alternativity for $|x||y|=0$ or $|x||y|=1$ respectively is
\begin{align}
& |x||y|=0: (x,y,z) \in  \left((A_0\times A_0) \cup (A_1 \times A_0) \cup (A_0 \times A_1)\right) \times A_{k},
\ \ k \in \mathbb{Z}_2 : \nonumber \\
&
(x\bullet y)\bullet\alpha(z)-\alpha(x)\bullet(y\bullet z) =
-((y \bullet x)\bullet\alpha(z)-\alpha(y)\bullet(x\bullet z)),
\\
& |x||y|=1: \quad (x,y,z) \in  A_1 \times A_1 \times A_k,
\ \ k \in \mathbb{Z}_2 :\nonumber \\
&
(x\bullet y)\bullet\alpha(z)-\alpha(x)\bullet(y\bullet z) =
(y \bullet x)\bullet\alpha(z)-\alpha(y)\bullet(x\bullet z).
\end{align}
\end{definition}

\begin{definition}
Right Hom-alternative superalgebra is a Hom-superalgebra $(A=A_0\oplus A_1, \bullet, \alpha)$ obeying the right Hom-alternative super identity
\begin{align}
&\forall \ x, y, z\in\mathcal{H}(A)=A_0\cup A_1:  \nonumber \\
& \quad \quad as_{\alpha,\bullet}(x, y, z)+(-1)^{|y||z|}as_{\alpha,\bullet}(x, z, y)=0, \label{righthomalternativesuperidentity}
\end{align}
which, in juxtaposition notation $x\bullet y=\bullet(x,y)$, is
$$(x \bullet y)\bullet\alpha(z)-\alpha(x)\bullet(y\bullet z) =
-(-1)^{|y||z|} ((x\bullet z)\bullet \alpha(y)-\alpha(x)\bullet (z \bullet y)).$$
For $(x, y, z)\in A_{|x|}\times A_{|y|} \times A_{|z|}$, $ |x|, |y|, |z| \in \mathbb{Z}_2$,
the left super Hom-alternativity for $|y||z|=0$ or $|y||z|=1$ respectively is
\begin{align}
& |y||z|=0: (x,y,z) \in  A_{k} \times \left((A_0\times A_0) \cup (A_1 \times A_0) \cup (A_0 \times A_1)\right),
\ \ k \in \mathbb{Z}_2, \nonumber \\
&
\quad
(x \bullet y)\bullet\alpha(z)-\alpha(x)\bullet(y\bullet z) =
-((x\bullet z)\bullet \alpha(y)-\alpha(x)\bullet (z \bullet y)),
\\
& |y||z|=1: (x,y,z) \in  A_k \times A_1 \times A_1,
\ \ k \in \mathbb{Z}_2,\nonumber \\
&
\quad
(x \bullet y)\bullet\alpha(z)-\alpha(x)\bullet(y\bullet z) =
((x\bullet z)\bullet \alpha(y)-\alpha(x)\bullet (z \bullet y)).
\end{align}
\end{definition}

\begin{definition}
Hom-alternative superalgebras are defined as both left and right Hom-alternative superalgebras.
\end{definition}

\begin{definition}
Hom-flexible superalgebra is a Hom-superalgebra $(A, \mu, \alpha)$ obeying the Hom-flexible super-identity
\begin{align}
&\forall \ x, y, z\in\mathcal{H}(A)=A_0\cup A_1:  \nonumber \\
& \quad \quad as_{\alpha,\bullet}(x, y, z)+(-1)^{|x||y|+|x||z|+|y||z|}as_{\alpha,\bullet}(z, y, x)=0, \label{homflexiblesuperidentity}
\end{align}
which, in juxtaposition notation $x\bullet y=\bullet(x,y)$, is
$$(x\bullet y)\bullet\alpha(z)-\alpha(x)\bullet(y\bullet z) =
-(-1)^{|x||y|+|x||z|+|y||z|} ((z\bullet y)\bullet\alpha(x)-\alpha(z)\bullet(y\bullet x)).$$
For $(x, y, z)\in A_{|x|}\times A_{|y|} \times A_{|z|}$, $ |x|, |y|, |z| \in \mathbb{Z}_2$,
the left super Hom-alternativity for $|x||y|+|x||z|+|y||z|=0\ \mbox{or} \ 1$ respectively is
\begin{align}
& |x||y|+|x||z|+|y||z|=0:
(x,y,z) \in
\begin{array}[t]{l}
(A_{1} \times A_0\times A_0 ) \cup (A_{0} \times A_1 \times A_0) \\
\quad \cup (A_{0} \times A_0 \times A_1) \cup (A_{0} \times A_0\times A_0),
\end{array}
\nonumber \\
&
(x\bullet y)\bullet\alpha(z)-\alpha(x)\bullet(y\bullet z) = - ((z\bullet y)\bullet\alpha(x)-\alpha(z)\bullet(y\bullet x)),
\\
& |x||y|+|x||z|+|y||z|=1:
(x,y,z) \in
\begin{array}[t]{l}
(A_{1} \times A_1\times A_0 ) \cup (A_{1} \times A_0 \times A_1) \\
\quad \cup (A_{0} \times A_1 \times A_1) \cup (A_{1} \times A_1\times A_1),
\end{array}
\nonumber \\
&
(x\bullet y)\bullet\alpha(z)-\alpha(x)\bullet(y\bullet z) =
(z\bullet y)\bullet\alpha(x)-\alpha(z)\bullet(y\bullet x).
\end{align}
\end{definition}

\begin{definition}
A bimodule over a Hom-alternative superalgebra $(A, \bullet, \alpha)$ consists of a $\mathbb{Z}_2$-graded linear space $V$ with an even linear map $\beta : V\rightarrow V$ and two even bilinear maps
\begin{eqnarray*}
 \succ\; :\quad  A\otimes V&\rightarrow& V \hspace{1,3cm} \prec \;:\quad V\otimes A\rightarrow V \\
x\otimes v&\mapsto&x\succ v \hspace{2cm}  v\otimes x\mapsto v\prec x
 \end{eqnarray*}
 such that, for any homogeneous elements $x, y\in A$ and  $v\in V$,
\begin{align}
&\begin{array}{l}
(v\prec x)\prec\alpha(y) +(-1)^{|x||v|}(x\succ v)\prec\alpha(y)-\\
\qquad (-1)^{|x||v|}\alpha(x)\succ(v\prec y) -\beta(v)\prec(x\bullet y)=0,
\end{array} \label{abm1}\\
&\begin{array}{l}
\alpha(y)\succ(v\prec x) -(y\succ v)\prec\alpha(x)- \\
\qquad (-1)^{|x||v|}(y\bullet x)\succ\beta(v)+ (-1)^{|x||v|}\alpha(y)\succ(x\succ v)=0,
\end{array} \label{abm2}\\
&\begin{array}{l}
(x\bullet y)\succ\beta(v) +(-1)^{|x||y|}(y\bullet x)\succ\beta(v)-\\
\qquad \alpha(x)\succ(y\succ v) -(-1)^{|x||y|}\alpha(y)\succ(x\succ v)=0,
\end{array} \label{abm3}\\
&\begin{array}{l}
\beta(v)\prec(x\bullet y) +(-1)^{|x||y|}\beta(v)\prec(y\bullet x)-\\
\qquad (v\prec x)\prec\alpha(y)- (-1)^{|x||y|}(v\prec y)\prec\alpha(x)=0.
\end{array}
\label{abm4}
\end{align}
\end{definition}
\begin{remark}
 The notation $x\succ v$ means the left action of $x$ on $v$ and
$v\prec x$ means the right action of $x$ on $v$ given by the
linear operators on $V$ defined by
$$L_\succ(x)v=x\succ v, \quad \quad R_\prec(x)v=v\prec x.$$
\end{remark}

Bimodules over Hom-alternative superalgebras are closed under twisting in the sense of Theorem \ref{thm:bimodulehomalttwisting}.
\begin{theorem} \label{thm:bimodulehomalttwisting}
 Let $(V, L_\succ, R_\prec, \beta)$ be a bimodule over the multiplicative Hom-alternative superalgebra
$(A, \bullet, \alpha)$. Then,
$(V, L_\succ^\alpha, R_\prec^\alpha, \beta)$ is a bimodule over $A$, where
$L_\succ^\alpha=L_\succ\circ(\alpha^2\otimes Id)$ and $R_\prec^\alpha=R_\prec\circ(\alpha^2\otimes Id).$
\end{theorem}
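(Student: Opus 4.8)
The plan is to verify the four bimodule axioms (\ref{abm1})--(\ref{abm4}) for the twisted data $(V,L_\succ^\alpha,R_\prec^\alpha,\beta)$ by reducing each of them to the corresponding untwisted axiom evaluated at shifted arguments. First I would rewrite the twisted operators on elements: since $L_\succ^\alpha=L_\succ\circ(\alpha^2\otimes Id)$ and $R_\prec^\alpha=R_\prec\circ(\alpha^2\otimes Id)$, for homogeneous $x\in A$ and $v\in V$ one has
\[
L_\succ^\alpha(x)v = \alpha^2(x)\succ v \qquad\mbox{and}\qquad R_\prec^\alpha(x)v = v\prec\alpha^2(x).
\]
Because $\alpha$ is even, $\alpha^2$ preserves parity, so $\alpha^2(x)$ and $x$ carry the same exponent in every Koszul sign $(-1)^{\cdots}$; this is precisely what will let the signs match automatically after substitution.

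The key observation is that each twisted axiom is exactly the corresponding original axiom with $x$ and $y$ replaced by $\alpha^2(x)$ and $\alpha^2(y)$. To see this on (\ref{abm1}), I substitute the element formulas above into the twisted version of (\ref{abm1}) and expand: the four summands become $(v\prec\alpha^2(x))\prec\alpha^3(y)$, $(-1)^{xv}(\alpha^2(x)\succ v)\prec\alpha^3(y)$, $-(-1)^{xv}\alpha^3(x)\succ(v\prec\alpha^2(y))$, and $-\beta(v)\prec\alpha^2(x\circ y)$. Here the multiplicativity $\alpha(x\circ y)=\alpha(x)\circ\alpha(y)$, applied twice, rewrites the last summand as $-\beta(v)\prec(\alpha^2(x)\circ\alpha^2(y))$. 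Reading (\ref{abm1}) at the homogeneous elements $\alpha^2(x)$ and $\alpha^2(y)$ — whose parities equal those of $x$ and $y$ — produces exactly these four terms, so the twisted (\ref{abm1}) coincides term by term with a valid instance of (\ref{abm1}) and therefore holds.

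I would then repeat the identical substitution for (\ref{abm2}), (\ref{abm3}) and (\ref{abm4}). In every case the only structural inputs are the element formulas for $L_\succ^\alpha,R_\prec^\alpha$, the evenness of $\alpha^2$ (to preserve all signs), and the multiplicativity of $\alpha$ (to convert each occurrence of $\alpha^2(x\circ y)$ or $\alpha^2(y\circ x)$ into $\alpha^2(x)\circ\alpha^2(y)$, resp.\ $\alpha^2(y)\circ\alpha^2(x)$). Since the original axioms (\ref{abm1})--(\ref{abm4}) hold for all homogeneous arguments, they hold in particular at $\alpha^2(x),\alpha^2(y)$, and each twisted identity follows at once. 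The main (and essentially only) obstacle is purely bookkeeping: keeping track of the powers of $\alpha$ produced by the single $\alpha$ already present in (\ref{abm1})--(\ref{abm4}) together with the $\alpha^2$ introduced by the twist, so that each term carries $\alpha^3$ on the outer argument and $\alpha^2$ on the inner one; no identity beyond multiplicativity is required.
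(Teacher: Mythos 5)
Your proof is correct and is essentially the paper's own method: the paper states this theorem without proof, but the identical technique — rewriting the twisted actions as $\alpha^2(x)\succ v$ and $v\prec\alpha^2(x)$, using multiplicativity to convert $\alpha^2(x\circ y)$ into $\alpha^2(x)\circ\alpha^2(y)$, and recognizing each twisted axiom as an instance of (\ref{abm1})--(\ref{abm4}) at the shifted arguments $\alpha^2(x),\alpha^2(y)$ — is exactly how the paper proves its analogous twisting theorem for bimodules over multiplicative Hom-prealternative superalgebras in the final section. Your additional remark that the evenness of $\alpha$ keeps all Koszul signs unchanged is a point the paper uses tacitly, and your bookkeeping of the powers $\alpha^3$ (outer) versus $\alpha^2$ (inner) matches the computation displayed there.
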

\begin{proof}
We only prove \eqref{abm1}, as \eqref{abm2}, \eqref{abm3}, \eqref{abm4} are proved similarly.
With
 \begin{align*}
  x\succeq v&=L_\succ^\alpha(x)v=L_\succ\circ(\alpha^2\otimes Id)(x\otimes v)=\alpha^2(x)\succ v,\\
v\preceq x&=R_\prec^\alpha(x)v=R_\prec\circ(id\otimes\alpha^2)(v\otimes x)=v\prec\alpha^2(x),
 \end{align*}
for any $x, y\in A$ and any $v\in V$,
\begin{align*}
&(v\preceq x)\preceq\alpha(y)+(-1)^{|x||v|}(x\succeq v)\preceq\alpha(y)\\
& \qquad -(-1)^{|x||v|}\alpha(x)\succeq(v\preceq y) -\beta(v)\preceq(x\bullet y)= \\
&(v\prec\alpha^2(x))\prec\alpha^3(y)+(-1)^{|x||v|}(\alpha^2(x)\succ v)\prec\alpha^3(y) \\
& \qquad -(-1)^{|x||v|}\alpha^3(x)\succ(v\prec\alpha^2(y))-\beta(v)\prec\alpha^2(x\bullet y) = \\
&(v\prec\alpha^2(x))\prec\alpha^3(y)+(-1)^{|x||v|}(\alpha^2(x)\succ v)\prec\alpha^3(y) \\
& \qquad -(-1)^{|x||v|}\alpha^3(x)\succ(v\prec\alpha^2(y))-\beta(v)\prec(\alpha^2(x)\bullet \alpha^2(y)) = 0,
\end{align*}
by using the multiplicativity of $\alpha$ in the last term, and then \eqref{abm1} for $\alpha^2(x)$ and $\alpha^2(y)$ in $(V, L_\succ, R_\prec, \beta)$.
\end{proof}

For two $\mathbb{Z}_2$-graded linear spaces $V=\oplus_{a\in \mathbb{Z}_2} V_a$ and $V'=\oplus_{a\in \mathbb{Z}_2} V'_a$, the tensor product $V\otimes V'$ is also a $\mathbb{Z}_2$-graded linear space such that for $\alpha\in \mathbb{Z}_2$,
$$(V\otimes V')_\alpha=\sum_{\alpha=a+a'}V_a\otimes V_{a'}.$$

\begin{theorem}
 Let $(A, \bullet, \alpha)$ be a super-commutative Hom-associative  superalgebra and
 $({A}', \bullet', \alpha')$ be a Hom-alternative superalgebra.
Then the tensor product
$(A\otimes{A}', \ast, \alpha\otimes\alpha')$ where for $x, y\in\mathcal{H}(A), a, b\in\mathcal{H}(A')$,
\begin{eqnarray*}
 (\alpha\otimes\alpha')(x\otimes a)&=& \alpha(x)\otimes\alpha'(a), \\
(x\otimes a)\ast(y\otimes b)&=& (-1)^{|a||y|}(x\bullet y)\otimes (a\bullet' b),
\end{eqnarray*}
is a Hom-alternative superalgebra
\end{theorem}
\begin{proof}
 Let us set $X=x\otimes a,\ Y=y\otimes b,\ Z=z\otimes c \ \in \mathcal{H}(A)\otimes\mathcal{H}(A')$.
 Then,
\begin{align*}
& as_{\alpha\otimes\alpha',\ast}(X, Y, Z)=as_{\alpha\otimes\alpha',\ast}
(x\otimes a, y\otimes b, z\otimes c) \\
& = ((x\otimes a)\ast(y\otimes b))\ast (\alpha\otimes\alpha')(z\otimes c)-
 (\alpha\otimes\alpha')(x\otimes a)\ast ((y\otimes b)\ast (z\otimes c))\\
& = \Big((x\otimes a)\ast(y\otimes b))\ast (\alpha(z)\otimes\alpha'(c))
-(\alpha(x)\otimes\alpha'(a))\ast ((y\otimes b)\ast (z\otimes c)\Big)\\
&  = (-1)^{|a||y|}\Big((x\bullet y)\otimes (a\bullet' b))\ast (\alpha(z)\otimes\alpha'(c)) \\
& \hspace{2cm} -(-1)^{|b||z|}(\alpha(x)\otimes\alpha'(a))\ast((y\bullet z)\otimes (b \bullet' c)\Big)\\
&  = (-1)^{|a||y|+(|a|+|b|)|z|}
((x\bullet y)\bullet \alpha(z))\otimes ((a\bullet' b)\bullet' \alpha'(c))\\
& \hspace{2cm} -(-1)^{|b||z|+|a|(|y|+|z|)}
(\alpha(x)\bullet (y\bullet z))\otimes (\alpha'(a)\bullet' (b\bullet' c)). \\
& as_{\alpha\otimes\alpha',\ast}(X, Y, Z)+(-1)^{|XY|}as_{\alpha\otimes\alpha',\ast}(Y, X, Z)  \\
&= as_{\alpha\otimes\alpha',\ast}(x\otimes a, y\otimes b, z\otimes c) +
(-1)^{|(x\otimes a)\ast (y\otimes b)|}as_{\alpha\otimes\alpha',\ast}(x\otimes a, y\otimes b, z\otimes c)\\
& = (-1)^{|a||y|+(|a|+|b|)|z|}((x\bullet y)\bullet \alpha(z))\otimes((a\bullet' b)\bullet' \alpha'(c)) \\
& \quad \quad -(-1)^{|b||z|+|a|(|y|+|z|)}(\alpha(x)\bullet (y\bullet z))\otimes (\alpha'(a)\bullet' (b\bullet' c)) \\
& \quad \quad +(-1)^{(|x|+|a|)(|y|+|b|)+|b||x|+(|a|+|b|)|z|}((y\bullet x)\bullet \alpha(z))\otimes ((b\bullet' a)\bullet' \alpha'(c)) \\
& \quad \quad -(-1)^{(|x|+|a|)(|y|+|b|)+|a||z|+|b|(|x|+|z|)}(\alpha(y)\bullet (x\bullet z))\otimes
(\alpha'(b)\bullet' (a\bullet' c)).
\end{align*}
By super-commutativity, $x\bullet y=(-1)^{|x||y|}y\bullet x$, and Hom-associativity  \eqref{superHomassociativity},
\begin{align*}
& as_{\alpha\otimes\alpha',\ast}(X, Y, Z)+(-1)^{|XY|}as_{\alpha\otimes\alpha',\ast}(Y, X, Z)= \\
& (-1)^{|a||y|+|a||z|+|b||z|}\Big((x\bullet y)\bullet \alpha(z)-\alpha(x)\bullet (y\bullet z) \\
& \hspace{4cm} +(-1)^{|x||y|}(y\bullet x)\bullet \alpha(z)-(-1)^{|x||y|}\alpha(y)\bullet (x\bullet z)\Big) \\
& \hspace{8cm} \otimes(a\bullet' b)\bullet' \alpha'(c).
\end{align*}
The left hand side vanishes by the left Hom-alternativity of $A'$. The right Hom-alternativity is proved similarly.
\end{proof}

\begin{definition}[\cite{BakayokoToure:somecolorHomalgstr}] \label{bk3}
{\it Left averaging operator} over a Hom-alternative superalgebra  $(A, \cdot, \alpha)$ is an even linear map $\beta : A\rightarrow A$ satisfying
\begin{eqnarray*}
\alpha\circ\beta &=& \beta\circ\alpha, \\
\beta(x)\cdot \beta(y) &=& \beta(\beta(x)\cdot y) \quad  \mbox{for all}\; x, y\in\mathcal{H}(A).
\end{eqnarray*}
{\it Right averaging operator} over a Hom-alternative superalgebra  $(A, \cdot, \alpha)$ is an even
linear map $\beta: A\rightarrow A$ such that $\alpha\circ\beta=\beta\circ\alpha$ and
$$\beta(x)\cdot \beta(y)=\beta(x\cdot \beta(y)) \;\;\;
\mbox{for all} \; x, y\in\mathcal{H}(A).$$
{\it Averaging operator} over a Hom-alternative superalgebra  $(A, \cdot, \alpha)$ is both left averaging operator and right averaging operator, meaning an even
linear map $\beta: A\rightarrow A$ such that $\alpha\circ\beta=\beta\circ\alpha$ and
$$\beta(\beta(x)\cdot y)= \beta(x)\cdot \beta(y)=\beta(x\cdot \beta(y)).$$
\end{definition}

\begin{proposition}
Let $(A, \cdot, \alpha)$ be a Hom-alternative  algebra. Let $\beta : A\rightarrow A$ be an
element of the centroid, an even linear map such that
for all $x, y\in\mathcal{H}(A),$
\begin{align}
& \beta\circ\alpha=\alpha\circ\beta, \label{centroid:alphabetacomut}\\
& \beta(x\cdot y)=\beta(x)\cdot y = x \cdot \beta(y). \label{centroid}
\end{align}
Then
 $(A, \cdot_\beta = \beta\circ\cdot, \alpha)$
is a Hom-alternative superalgebra.
\end{proposition}
\begin{proof} Hom-alternativity means both left and right Hom-alternativity.
The left and the right Hom-alternativity
of $(A, \cdot_\beta = \beta\circ\cdot, \alpha)$ are proved respectively as follows.
For any $x, y, z\in\mathcal{H}(A)$,
\begin{align}
% left alternativity proof
as_{\alpha,\cdot_\beta}(x,y,z)&= (x\cdot_\beta y)\cdot_\beta\alpha(z)-\alpha(x)\cdot_\beta (y\cdot_\beta z) \nonumber \\
&=\beta (\beta(x\cdot y)\cdot\alpha(z))-\beta (\alpha(x)\cdot\beta (y\cdot z)) \nonumber\\
&\stackrel{\eqref{centroid}}{=}
\beta((\beta(x)\cdot y))\cdot\alpha(z)-\beta (\alpha(x))\cdot(\beta (y)\cdot z) \nonumber\\
&\stackrel{\eqref{centroid}}{=}
(\beta(x)\cdot \beta(y))\cdot \alpha(z)-
\beta (\alpha(x))\cdot(\beta(y) \cdot z)  \nonumber\\
&\stackrel{\eqref{centroid:alphabetacomut}}{=} (\beta(x)\cdot\beta(y))\cdot\alpha(z)
-\alpha (\beta(x))\cdot(\beta(y) \cdot z)   \nonumber\\
&=
as_{\alpha,\cdot}(\beta(x),\beta(y),z) \label{altcompmultproofstep} \\
&\text{(using left Hom-alternativity of $(A, \cdot, \alpha)$)} \nonumber \\
&\stackrel{\eqref{lefthomalternativesuperidentity}}{=}
-(-1)^{|x||y|}  as_{\alpha,\cdot}(\beta(y),\beta(x),z)  \nonumber\\
%&= -(-1)^{|x||y|}\Big((\beta(y)\cdot\beta(x))\cdot\alpha(z)-\alpha(\beta(y))\cdot(\beta (x)\cdot z)\Big) \nonumber \\
&\text{(using proved in \eqref{altcompmultproofstep} for $(y,x,z)$)} \nonumber \\
%&=
%-(-1)^{|x||y|}\Big((y\cdot_\beta x)\cdot_\beta\alpha(z)-\alpha(y)\cdot_\beta (x\cdot_\beta z)\Big)
%\nonumber \\
&=
-(-1)^{|x||y|}as_{\alpha,\cdot_\beta}(y,x,z),   \nonumber
\\
% right alternativity proof
as_{\alpha,\cdot_\beta}(x,y,z) &=
(x\cdot_\beta y)\cdot_\beta\alpha(z)-\alpha(x)\cdot_\beta (y\cdot_\beta z) \nonumber \\
&=\beta (\beta(x\cdot y)\cdot\alpha(z))-\beta (\alpha(x)\cdot\beta (y\cdot z))
\nonumber \\
&\stackrel{\eqref{centroid}}{=}
(\beta(x)\cdot y)\cdot \beta(\alpha(z))-
\beta (\alpha(x))\cdot(y \cdot \beta(z))
\nonumber \\
&\stackrel{\eqref{centroid:alphabetacomut}}{=}
(\beta(x)\cdot y)\cdot\alpha(\beta(z))-\alpha (\beta(x))\cdot(y \cdot \beta(z))
\nonumber \\
&=
as_{\alpha,\cdot}(\beta(x),y,\beta(z)) \nonumber \\
&\text{(using right Hom-alternativity of $(A, \cdot, \alpha)$)} \nonumber \\
&\stackrel{\eqref{righthomalternativesuperidentity}}{=}
-(-1)^{|y||z|}  as_{\alpha,\cdot}(\beta(x),\beta(z),y) \nonumber \\
%&= -(-1)^{|y||z|}\Big((\beta(x)\cdot\beta(z))\cdot\alpha(y)-\alpha(\beta(x))\cdot(\beta (z)\cdot y)\Big) \nonumber\\
&\text{(using proved in \eqref{altcompmultproofstep} for $(x,z,y)$)} \nonumber \\
%&=
%-(-1)^{|y||z|}\Big((x\cdot_\beta z)\cdot_\beta\alpha(y)-\alpha(x)\cdot_\beta (z\cdot_\beta y)\Big)
%\nonumber\\
&=
-(-1)^{|y||z|}as_{\alpha,\cdot_\beta}(x,z,y).  \nonumber
\qedflrght \end{align}
\phantom{\qedhere}
\end{proof}
\begin{proposition}
 Any Hom-alternative superalgebra $(A, \cdot, \alpha)$ with an averaging operator $\partial$ is a Hom-alternative superalgebra with respect to multiplication $\ast : A\times A\rightarrow A$ defined by $x\ast y:=x\cdot\partial(y)$ and the same twisting map $\alpha$.
\end{proposition}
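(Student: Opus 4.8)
The plan is to verify directly that $(A,\ast,\alpha)$ satisfies both the left and the right Hom-alternative super-identities, by first expressing the associator $as_\ast$ of the new product in terms of the associator $as_\cdot$ of the original one and then invoking the Hom-alternativity of $(A,\cdot,\alpha)$. For homogeneous $x,y,z\in\mathcal{H}(A)$, unfolding $x\ast y=x\cdot\partial(y)$ gives
$$(x\ast y)\ast\alpha(z)=(x\cdot\partial(y))\cdot\partial(\alpha(z)),\qquad \alpha(x)\ast(y\ast z)=\alpha(x)\cdot\partial\big(y\cdot\partial(z)\big).$$
At this point I would invoke the two defining properties of the averaging operator: the commutation $\alpha\circ\partial=\partial\circ\alpha$, to replace $\partial(\alpha(z))$ by $\alpha(\partial(z))$, and the averaging identity $\partial(y\cdot\partial(z))=\partial(y)\cdot\partial(z)$, to strip the outer $\partial$ from the second term. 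This yields the compact reduction
$$as_\ast(x,y,z)=(x\cdot\partial(y))\cdot\alpha(\partial(z))-\alpha(x)\cdot(\partial(y)\cdot\partial(z))=as_\cdot\big(x,\partial(y),\partial(z)\big),$$
so that the associator of $\ast$ is just the associator of $\cdot$ with $\partial$ inserted in its last two slots. Since $\partial$ is even, parities are preserved ($|\partial(y)|=|y|$), so all Koszul signs coincide with those one would attach to $as_\cdot$.

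With this identity the right Hom-alternative combination is the most direct. Because $as_\ast(x,z,y)=as_\cdot(x,\partial(z),\partial(y))$, forming $as_\ast(x,y,z)+(-1)^{yz}as_\ast(x,z,y)$ amounts to transposing exactly the second and third slots of $as_\cdot$, which are precisely the slots carrying $\partial$; the right Hom-alternative identity of $(A,\cdot,\alpha)$ applied to the triple $(x,\partial(y),\partial(z))$ then forces the sum to vanish. So I expect the right identity to fall out with essentially no extra work.

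The step I expect to be the main obstacle is the left Hom-alternative identity. Here the reduced form $as_\cdot(x,\partial(y),\partial(z))$ carries $\partial$ on the second and third slots but \emph{not} on the first, while transposing $x$ and $y$ produces $as_\cdot(y,\partial(x),\partial(z))$; thus $\partial$ does not sit symmetrically with respect to the transposition of the first two arguments. This is in sharp contrast with the preceding proposition on an element of the centroid, where the twist landed on the first two slots and so matched the left identity exactly. Consequently one cannot simply quote the left Hom-alternativity of $\cdot$, and I would instead mirror the bookkeeping of that proposition: apply the left (and, where needed, the right) Hom-alternative identity of $\cdot$ to each associator separately, use the averaging relations $\partial(\partial(u)\cdot v)=\partial(u)\cdot\partial(v)=\partial(u\cdot\partial(v))$ to shuttle $\partial$ between slots, and keep careful track of the resulting signs until the two contributions cancel. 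This sign-and-$\partial$ tracking for the left identity is the delicate point of the argument; everything else is the routine unfolding described above.
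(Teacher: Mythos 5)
Your reduction $as_\ast(x,y,z)=as_\cdot\big(x,\partial(y),\partial(z)\big)$ is correct, and with it your right-identity argument is complete: the two slots carrying $\partial$ are exactly the slots being transposed, so the right Hom-alternative super-identity of $(A,\cdot,\alpha)$ applied to $(x,\partial(y),\partial(z))$ finishes it. But the left identity, which you defer to ``sign-and-$\partial$ bookkeeping,'' is a genuine gap that cannot be closed: the combination $as_\cdot(x,\partial(y),\partial(z))+(-1)^{xy}as_\cdot(y,\partial(x),\partial(z))$ involves two \emph{different} triples and is not a consequence of Hom-alternativity together with the averaging identities. Concretely, already in the ungraded case with $\alpha=\mathrm{id}$, take $A=\mathbb{O}$ the octonions and $\partial$ the projection onto the quaternion subalgebra $\mathbb{H}$ along $\mathbb{H}\ell$; since $\mathbb{H}\cdot(\mathbb{H}\ell)\subseteq\mathbb{H}\ell$ and $(\mathbb{H}\ell)\cdot\mathbb{H}\subseteq\mathbb{H}\ell$, one checks directly that $\partial$ is an averaging operator. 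Yet with $x=\ell$, $y=i$, $z=j$ one gets
$$as_\ast(\ell,i,j)+as_\ast(i,\ell,j)=as_\cdot(\ell,i,j)+as_\cdot(i,\partial(\ell),j)=as_\cdot(\ell,i,j)\neq 0,$$
since $\partial(\ell)=0$ and the associator of $\ell,i,j$ in $\mathbb{O}$ does not vanish. So $(A,\ast,\alpha)$ is right Hom-alternative but in general \emph{not} left Hom-alternative, and no amount of shuttling $\partial$ between slots will rescue the left identity.

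It is worth seeing how the paper's own proof avoids facing this: its very first equality rewrites $(x\ast y)\ast\alpha(z)=(x\cdot\partial(y))\cdot\alpha(\partial(z))$ as $\alpha(x)\cdot(\partial(y)\cdot\partial(z))$, which is precisely the unjustified claim $as_\cdot(x,\partial(y),\partial(z))=0$ --- i.e., it silently uses Hom-associativity where only Hom-alternativity is available --- and then concludes $as_\ast\equiv 0$, from which both super-identities follow trivially by ``exchanging roles.'' Your instinct that the left identity is the delicate point, because $\partial$ sits asymmetrically with respect to the transposition of the first two arguments (in contrast to the centroid proposition, where the twist lands on the slots being transposed), pinpoints exactly the spot the paper glosses over. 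What survives is your right-identity proof, and the full statement becomes correct under the stronger hypothesis that $(A,\cdot,\alpha)$ is Hom-associative, in which case $\ast$ is Hom-associative --- which is in effect the computation the paper actually performs.
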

\begin{proof}
For any $x, y, z\in\mathcal{H}(A)$,
\begin{align*}
(x\ast y)\ast\alpha(z)-\alpha(x)\ast(y\ast z)
&=\alpha(x)\cdot (\partial(y)\cdot\partial(z))-\alpha(x)\cdot\partial(y\cdot\partial(z)) \\
&=\alpha(x)\cdot (\partial(y)\cdot\partial(z))-\alpha(x)\cdot(\partial(y)\cdot\partial(z))
=0.
\end{align*}
On the one hand, exchanging the role of $x$ and $y$, yields
$$
(x\ast y)\ast\alpha(z)-\alpha(x)\ast(y\ast z)+(-1)^{|x||y|}\Big((y\ast x)\ast\alpha(z)-\alpha(y)\ast(x\ast z)\Big)=0.
$$
On the other hand, exchanging the role of $y$ and $z$, yields
$$
(x\ast y)\ast\alpha(z)-\alpha(x)\ast(y\ast z)+(-1)^{|y||z|}\Big((x\ast z)\ast\alpha(y)-\alpha(x)\ast(z\ast y)\Big)=0.
$$
This completes the proof.
\end{proof}

\begin{definition}[\cite{AbAmMakh:HomaltHomMalcHomJordSuperalg}]
\label{hjd}
 A Hom-Jordan superalgebra is a Hom-superalgebra $(A, \bullet, \alpha)$ satisfying
 super-commutativity and Hom-Jordan super identity
 \begin{align}
& \forall \ x, y, z, t\in\mathcal{H}(A):  \nonumber \\
& x\bullet y=(-1)^{|x||y|}y\bullet x, &  \mbox{super-commutativity} \\
& \sum_{\circlearrowleft(x,y,t)}(-1)^{|t|(|x|+|z|)}as_{\bullet,\alpha}(xy, \alpha(z), \alpha(t))=0,
& \begin{array}[t]{l}
\mbox{Hom-Jordan } \\
\mbox{super identity}
\end{array}
\label{homjordansuperidentity}
\end{align}
where $\displaystyle{\sum_{\circlearrowleft(a,b,c)}}$ is the summation over cyclicly permutated $(a, b, c)$.
Hom-Jordan super identity \eqref{homjordansuperidentity} in juxtaposition notation
$x\bullet y=\bullet(x,y)$ is
\begin{align*}
& \forall \ x, y, z, t\in\mathcal{H}(A):  \\
&
\begin{array}[t]{c}
\displaystyle{\sum_{\circlearrowleft(x,y,t)}}(-1)^{|t|(|x|+|z|)} ((x\bullet y)\bullet \alpha(z))\bullet\alpha^2(t)= \\
\hspace{4cm} \displaystyle{\sum_{\circlearrowleft(x,y,t)}}(-1)^{|t|(|x|+|z|)} \alpha(x\bullet y)\bullet(\alpha(z) \bullet \alpha(t)).
\end{array}
\end{align*}
\end{definition}

\begin{remark}
If $(x,y,z,t)\in (A_0\times A_0\times A_0\times A_0) \cup (A_1\times A_1 \times A_1 \times A_1)$, then $(-1)^{|t|(|x|+|z|)}=(-1)^{|x|(|y|+|z|)}=(-1)^{|y|(|t|+|z|)}=1$, and Hom-Jordan super identity is
\begin{eqnarray}
\sum_{\circlearrowleft(x,y,t)}((x\bullet y) \bullet \alpha(z))\bullet\alpha^2(t)=
\sum_{\circlearrowleft(x,y,t)} \alpha(x\bullet y)\bullet(\alpha(z) \bullet \alpha(t)).
\end{eqnarray}
\end{remark}

\begin{theorem}[\cite{AbAmMakh:HomaltHomMalcHomJordSuperalg}]
\label{hj}
Any multiplicative Hom-alternative superalgebra is Hom-Jordan admissible, that is, for any multiplicative Hom-alternative superalgebra $(A, \bullet, \alpha)$, the Hom-superalgebra
$A^+=(A, \ast, \alpha)$ is a multiplicative Hom-Jordan superalgebra, where
$x\ast y=xy+(-1)^{|x||y|}yx$.
\end{theorem}

\section{Hom-prealternative and Hom-alternative superalgebras} \label{sec3:homprealtsuperalg}
In this section, we introduce Hom-prealternative superalgebras, give some contruction theorems and study their connection with Hom-alternative superalgebras. The associated bimodules are also discussed.
\subsection{Prealternative superalgebras}
\begin{definition} \label{pad}
A Hom-prealternative superalgebra is a quadruple $(A, \prec, \succ, \alpha)$ in which $A$ is a supervector space,
 $\prec, \succ : A\otimes A\rightarrow A$ are even bilinear maps and $\alpha : A\rightarrow A$ an even linear map such that,
for any $x, y, z\in\mathcal{H}(A)$,
\begin{align}
& \begin{array}{l} (x\bullet x)\succ\alpha(y)-\alpha(x)\succ(x\succ y)=0, \end{array}
\label{pa1}\\
& \begin{array}{l} (x\prec y)\prec\alpha(y)-\alpha(x)\prec(y\bullet y)=0, \end{array}
\label{pa2}\\
&\begin{array}{l}
(x\succ y)\prec\alpha(z) -\alpha(x)\succ(y\prec z)+ \\
\quad (-1)^{|x||y|}(y\prec x)\prec\alpha(z)-(-1)^{|x||y|}\alpha(y)\prec(x\bullet z)=0,
\end{array}
\label{pa3}\\
&
\begin{array}{l}
(x\succ y)\prec\alpha(z) -\alpha(x)\succ(y\prec z)+  \\
\quad (-1)^{|y||z|}(x\bullet z)\succ\alpha(y)-(-1)^{|y||z|}\alpha(x)\succ(z\succ y)=0,
\end{array}
 \label{pa4}
\end{align}
where $x\bullet y=x\succ y+x\prec y$.
\end{definition}
\begin{definition}
Let $(A, \prec, \succ, \alpha)$ and $(A', \prec', \succ', \alpha')$ be two Hom-prealternative superalgebras.
An even linear map $f : A\rightarrow A'$ is said to be a morphism of Hom-prealternative superalgebras if,
for any $x, y\in\mathcal{H}(A)$,
\begin{eqnarray*}
 \alpha'\circ f=f\circ\alpha,\;\; f(x\prec y)=f(x)\prec'f(y) \;\;\mbox{and}\;\; f(x\succ y)=f(x)\succ'f(y).
\end{eqnarray*}
A Hom-prealternative superalgebra $(A, \prec, \succ, \alpha)$ in which $\alpha : A\rightarrow A$ is a morphism is called a multiplicative Hom-alternative
superalgebra.
\end{definition}
\begin{remark}
Axioms \eqref{pa1} and \eqref{pa2} can be rewritten respectively as
\begin{eqnarray}
\begin{aligned}
(x\bullet y)\succ\alpha(z) &-\alpha(x)\succ(y\succ z)+ \\
& (-1)^{|x||y|}(y\bullet x)\succ\alpha(z)-(-1)^{|x||y|}\alpha(y)\succ(x\succ z)=0,
\end{aligned} \label{pa5}\\
\begin{aligned}
(x\prec y)\prec\alpha(z) &-\alpha(x)\prec(y\bullet z)+\\
& (-1)^{|y||z|}(x\prec z)\prec\alpha(y)-(-1)^{|y||z|}\alpha(x)\prec(z\bullet y)=0.
\end{aligned}
\label{pa6}
\end{eqnarray}
\end{remark}

\begin{remark}
  If $(A, \prec, \succ, \alpha)$ is a Hom-prealternative superalgebra, then so is $(A, \prec_\lambda=\lambda\cdot \prec,
\succ_\lambda=\lambda\cdot\succ, \alpha)$.
\end{remark}
Using the following notations \cite{NBPrealtalg}, \cite{Felipe:DendiformalgRotabaxt2013}:
\begin{eqnarray}
 (x, y, z)_1&=&(x\bullet y)\succ\alpha(z)-\alpha(x)\succ(y\succ z),\\
(x, y, z)_2&=&(x\succ y)\prec\alpha(z)-\alpha(x)\succ(y\prec z),\\
(x, y, z)_3&=&(x\prec y)\prec\alpha(z)-\alpha(x)\prec(y\bullet z),
\end{eqnarray}
the axioms in Definition \ref{pad} of Hom-prealternative superalgebras can be rewritten for any $x, y, z\in\mathcal{H}(A)$ as
\begin{eqnarray}
(x, x, z)_1=(y, x, x)_3&=&0\\
(x, y, z)_2+(-1)^{|x||y|}(y, x, z)_3 &=&0,\\
(x, y, z)_2+  (-1)^{|y||z|}(x, z, y)_1 &=&0.
\end{eqnarray}

The following definition is motivated by \cite[Definition 17, Definition 18]{Felipe:DendiformalgRotabaxt2013}.
\begin{definition}
 A Hom-prealternative superalgebra $(A, \prec, \succ, \alpha)$ is said to be left Hom-alternative if
 \begin{eqnarray}
  (x, y, z)_i+(-1)^{|x||y|}(y, x, z)_i=0, \quad i=1, 2, 3.
 \end{eqnarray}
and right Hom-alternative if
 \begin{eqnarray}
  (x, y, z)_i+(-1)^{|y||z|}(x, z, y)_i=0, \quad i=1, 2, 3.
 \end{eqnarray}
\end{definition}
\begin{definition}
 A Hom-prealternative superalgebra algebra $(A, \prec, \succ, \alpha)$ is said to be flexible if
\begin{eqnarray}
 (x, y, x)_i=0, \quad i=1, 2, 3.
\end{eqnarray}
\end{definition}
\begin{theorem}
If $(A, \prec, \succ, \alpha)$ is a left Hom-prealternative superalgebra, then
 $Alt(A)=(A, \bullet, \alpha)$ is a left Hom-alternative superalgebra. If $(A, \prec, \succ, \alpha)$ is a right Hom-prealternative superalgebra, then
 $Alt(A)=(A, \bullet, \alpha)$ is a right Hom-alternative superalgebra.
\end{theorem}
\begin{proof}
For any $x, y, z\in\mathcal{H}(A)$,
 \begin{align*}
& as_\bullet(z, x, y) =(z\bullet x)\bullet\alpha(y)-\alpha(z)\bullet(x\bullet y)\\
& \qquad =(z\bullet x)\prec\alpha(y)+(z\bullet x)\succ\alpha(y)-\alpha(z)\prec(x\bullet y)-\alpha(z)\succ(x\bullet y)\\
& \quad =(z\prec x+z\succ x)\prec\alpha(y)+(z\bullet x)\succ\alpha(y)- \\
& \qquad \qquad \qquad   \alpha(z)\prec(x\bullet y)-\alpha(z)\succ(x\prec y+x\succ y)\\
& \quad =((z\prec x)\prec\alpha(y)-\alpha(z)\prec(x\bullet y))+
((z\succ x)\prec\alpha(y)-\alpha(z)\succ(x\prec y)) + \\
& \qquad \qquad \qquad
((z\bullet x)\succ\alpha(y)-\alpha(z)\succ(x\succ y))\\
& \quad =(z, x, y)_3+(z, x, y)_2+(z, x, y)_1\\
& \quad =-(-1)^{|x||y|}((z, y, x)_3+(z, y, x)_2+(z, y, x)_1) \\
& \quad =-(-1)^{|x||y|}  as_\bullet(z, y, x).
 \end{align*}
The left alternatively is proved analogously.
\end{proof}
Note that the left and right Hom-alternativity for dialgebras is not defined in the same way that the one of algebras with one operation;
so the two terminologies must not be confused.
\begin{proposition}
 Let $(A, \prec, \succ, \alpha)$ be a fexible Hom-prealternative superalgebra. Then $(A, \bullet, \alpha)$ is a flexible Hom-alternative superalgebra.
\end{proposition}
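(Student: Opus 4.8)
The plan is to recycle the associator decomposition that already appears in the proof of the preceding alternative theorem. First I would write $as_\circ(x,y,z)=(x\circ y)\circ\alpha(z)-\alpha(x)\circ(y\circ z)$ and substitute $\circ=\succ+\prec$ in every slot. Grouping the resulting terms exactly as before, with the key simplification $\alpha(x)\succ(y\succ z)+\alpha(x)\succ(y\prec z)=\alpha(x)\succ(y\circ z)$ recombining all the mixed contributions back into $\alpha(x)\circ(y\circ z)$, collapses the expansion into
$$as_\circ(x,y,z)=(x,y,z)_1+(x,y,z)_2+(x,y,z)_3,$$
valid for all $x,y,z\in\mathcal{H}(A)$ with no extra hypotheses. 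This is the same identity used in the left/right alternative theorem, so I would simply invoke it.

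With this identity in hand, flexibility of $(A,\circ,\alpha)$ is essentially immediate. Setting $z=x$ and applying the flexibility hypothesis $(x,y,x)_i=0$ for $i=1,2,3$ gives $as_\circ(x,y,x)=0$ for all homogeneous $x,y$; that is, the diagonal associator of the single operation $\circ$ vanishes, which is the flexibility of $\circ$.

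To recover verbatim the Hom-flexible super-identity of the earlier definition, $as_\circ(x,y,z)+(-1)^{xy+xz+yz}as_\circ(z,y,x)=0$, I would then polarize: replace $x$ by $x+z$ (taken homogeneous) in each relation $(x,y,x)_i=0$, expand $(x+z,y,x+z)_i$ by trilinearity, and subtract off the pure diagonal terms $(x,y,x)_i$ and $(z,y,z)_i$. Summing the three resulting relations and feeding each piece through the decomposition identity above then produces the flexible identity for $\circ$.

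The step I expect to be the main obstacle is the Koszul sign bookkeeping in this last polarization: one has to verify that separating the two occurrences of the repeated argument past the middle factor $y$ reproduces precisely the sign $(-1)^{xy+xz+yz}$, and that the signs coming from the three pieces $(\cdot,\cdot,\cdot)_1$, $(\cdot,\cdot,\cdot)_2$, $(\cdot,\cdot,\cdot)_3$ coincide so that a common factor can be pulled out of the sum. Since $\mathrm{char}\,\mathbb{K}\neq 2$ there is no obstruction from a factor of $2$ in the linearization, and once the signs are pinned down the remainder is a mechanical check.
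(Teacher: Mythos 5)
Your decomposition step and the diagonal conclusion are correct, and they are exactly the route the paper intends: the identity
$$as_\circ(x,y,z)=(x,y,z)_1+(x,y,z)_2+(x,y,z)_3$$
holds for all $x,y,z\in\mathcal{H}(A)$ with no hypotheses (it is established inside the proof of the preceding theorem on left/right alternativity, where the paper expands $as_\circ(z,x,y)$ as $(z,x,y)_3+(z,x,y)_2+(z,x,y)_1$), and since the paper's definition of a flexible Hom-prealternative superalgebra is the diagonal condition $(x,y,x)_i=0$, $i=1,2,3$, the matching diagonal reading of flexibility for $(A,\circ,\alpha)$ gives $as_\circ(x,y,x)=0$ immediately; together with Theorem \ref{hl} (which makes $(A,\circ,\alpha)$ Hom-alternative), the proof is complete at the end of your second paragraph.

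Your third step, however, contains a genuine gap, and it is not the ``sign bookkeeping'' you anticipate but a structural failure of linearization in the $\mathbb{Z}_2$-graded setting. Substituting $x\mapsto x+z$ into $(x,y,x)_i=0$ is legitimate only when $x$ and $z$ have the \emph{same} parity (otherwise $x+z$ is not homogeneous and the diagonal axiom does not apply to it), and in that case trilinear expansion yields the \emph{unsigned} relation $as_\circ(x,y,z)+as_\circ(z,y,x)=0$: linearization of a multilinear identity introduces no Koszul signs, since no arguments are permuted. But the Hom-flexible super-identity requires the sign $(-1)^{xy+xz+yz}$, which for $x,z$ both odd equals $-1$ (indeed $xy+xz+yz\equiv q+1+q\equiv 1 \pmod 2$ when $x,z$ have parity $1$ and $y$ has parity $q$), i.e.\ the \emph{antisymmetric} relation $as_\circ(x,y,z)-as_\circ(z,y,x)=0$ — the opposite of what polarization produces. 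For $x,z$ of different parities polarization produces nothing at all, yet the signed identity still imposes a nontrivial constraint there. So the diagonal condition and the signed super-identity are genuinely inequivalent (in either direction: note also that for odd $x$ the signed identity degenerates, since $1+(-1)^{xy+x+xy}=1+(-1)^{x}=0$, and thus says nothing about $as_\circ(x,y,x)$), and no choice of signs rescues the argument. You should delete the polarization step and interpret the conclusion diagonally, consistently with the paper's own definition of flexibility for prealternative superalgebras.
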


\begin{theorem} \label{hk}
Let $(A, \prec, \succ, \alpha)$ be a Hom-prealternative superalgebra.
Then $A'=(A, \prec', \succ', \alpha)$ is also a
Hom-prealternative superalgebra with
\begin{eqnarray*}
x\prec' y &=& (-1)^{|x||y|}y\succ x, \\
 x\succ'y &=& (-1)^{|x||y|}y\prec x.
\end{eqnarray*}
\end{theorem}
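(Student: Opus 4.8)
The plan is to reduce everything to the three associator functions $(x,y,z)_1,(x,y,z)_2,(x,y,z)_3$ already introduced, and to show that passing to the primed operations merely reverses these three functions among themselves, up to one uniform Koszul sign. Writing $x\circ' y=x\succ' y+x\prec' y$, the definitions give at once $x\circ' y=(-1)^{xy}\,y\circ x$, so the new circle product is the super-opposite of the old one. This is the structural fact that drives the whole argument: the primed decomposition is the "reflection" of the unprimed one.

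First I would compute the three primed functions $(x,y,z)_i'$ — defined exactly as $(x,y,z)_i$ but with $\prec,\succ,\circ$ replaced by $\prec',\succ',\circ'$ — and re-express each through the original operations. Pushing the defining signs through each nested product and collecting them, I expect all accumulated exponents to collapse to the single factor $(-1)^{xy+xz+yz}$, giving the reversal relations
\begin{eqnarray}
(x,y,z)_1' &=& -(-1)^{xy+xz+yz}\,(z,y,x)_3,\nonumber\\
(x,y,z)_2' &=& -(-1)^{xy+xz+yz}\,(z,y,x)_2,\nonumber\\
(x,y,z)_3' &=& -(-1)^{xy+xz+yz}\,(z,y,x)_1.\nonumber
\end{eqnarray}
Thus the primed structure swaps the indices $1\leftrightarrow 3$ (fixing $2$) and reverses the order of the arguments.

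With these three identities the four axioms of Definition \ref{pad} for $A'$ follow by substitution into their compact form. For the two diagonal axioms, setting $y=x$ in the first relation gives $(x,x,z)_1'=-(-1)^{xx}(z,x,x)_3$, which vanishes because $(z,x,x)_3=0$ (unprimed axiom with last two arguments equal); symmetrically $(y,x,x)_3'=-(-1)^{xx}(x,x,y)_1=0$ since $(x,x,y)_1=0$. For the mixed axioms I would substitute the reversal relations into $(x,y,z)_2'+(-1)^{xy}(y,x,z)_3'$ and into $(x,y,z)_2'+(-1)^{yz}(x,z,y)_1'$; after the $(-1)^{2xy}$, respectively $(-1)^{2yz}$, cancellations, each reduces to a single unprimed axiom evaluated at the reversed triple $(z,y,x)$ — the third compact axiom for the first verification and the second compact axiom for the second — so both expressions vanish.

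The only genuine obstacle is the bookkeeping of Koszul signs. Each of $\prec',\succ',\circ'$ carries a sign depending on the parities of its two arguments, so in a term like $(x\succ' y)\prec'\alpha(z)$ the signs compound: the inner product first alters the parity pattern, then the outer product contributes its own factor. The delicate point is to verify that in every one of the six terms making up the three primed functions the total exponent simplifies modulo $2$ to exactly $xy+xz+yz$, using that $\alpha$ is even and that $(-1)^{2t}=1$ for any parity $t$. Once the three reversal relations are established cleanly, the four verifications are immediate and the theorem follows.
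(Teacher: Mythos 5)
Your proposal is correct and takes essentially the same route as the paper: the paper's proof also pushes the Koszul signs through the primed products to show that each primed axiom equals $(-1)^{xy+xz+yz}$ times an unprimed axiom evaluated at the reversed triple (it writes out axiom (\ref{pa3}) for $A'$, which collapses onto (\ref{pa4}), and declares the remaining axioms similar). Your three reversal relations $(x,y,z)_1'=-(-1)^{xy+xz+yz}(z,y,x)_3$, $(x,y,z)_2'=-(-1)^{xy+xz+yz}(z,y,x)_2$, $(x,y,z)_3'=-(-1)^{xy+xz+yz}(z,y,x)_1$ are all verified correct, and they simply systematize the paper's single representative computation so that all four axioms follow at once.
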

\begin{proof}
We prove only \eqref{pa3}, as \eqref{pa1}, \eqref{pa2} and \eqref{pa4} are proved similarly.
For any $x, y, z\in\mathcal{H}(A)$,
\begin{multline*}
\qquad(x\succ' y)\prec'\alpha(z)-\alpha(x)\succ'(y\prec z)+\\
(-1)^{|x||y|}(y\prec' x)\prec'\alpha(z)-(-1)^{|x||y|}\alpha(y)\prec'(x\bullet' z) \\
=(-1)^{|x||y|}(y\prec x)\prec'\alpha(z)-(-1)^{|y||z|}\alpha(z)\succ'(z\succ y)+ \\
(x\succ y)\prec'\alpha(z)-(-1)^{|x||y|+|x||z|}\alpha(y)\prec'(z\bullet x) \\
=(-1)^{|x||y|+(|x|+|y|)|z|}\alpha(z)\succ(y\prec x)-(-1)^{|y||z|+|x|(|y|+|z|)}(z\succ y)\prec\alpha(x)+ \\
\quad(-1)^{(|x|+|y|)|z|}\alpha(z)\succ(x\succ y)-(-1)^{|x||y|+|x||z|+|y|(|x|+|z|)}(z\bullet x)\succ\alpha(y) \\
=(-1)^{|x||y|+|x||z|+|y||z|}[\alpha(z)\succ(y\prec x)-(z\succ y)\prec\alpha(x)+ \\
(-1)^{|x||y|}\alpha(z)\succ(x\succ y)- (-1)^{|x||y|}(z\bullet x)\succ\alpha(y)] =0
\end{multline*}
by axiom \eqref{pa4} for $(A, \prec, \succ, \alpha)$.
\end{proof}

Note that $Alt(A')=Alt(A)^{op}$, that is, $x\bullet'y=(-1)^{|x||y|}y\bullet x$, for $x, y\in\mathcal{H}(A)$.
\begin{theorem}\label{hl}
 Let $(A, \prec, \succ, \alpha)$ be a Hom-prealternative superalgebra. Let us define the operation
$x\bullet y=x\prec y+x\succ y$ for any homogeneous elements $x, y$ in $A$.
Then $Alt(A)=(A, \bullet, \alpha)$ is a Hom-alternative superalgebra.
\end{theorem}
\begin{proof}
 Let us prove the left alternativity. For any homogeneous $x, y, z\in A$,
\begin{multline*}
as_\bullet(x, y, z)+(-1)^{|x||y|}as_\bullet(y, x, z)= \\
(x\prec y)\prec\alpha(z)+(x\succ y)\prec\alpha(z)+ (x\bullet y)\succ\alpha(z) -\alpha(x)\prec(y\bullet z) \\
-\alpha(x)\succ(y\succ z)-\alpha(x)\succ(y\prec z)+(-1)^{|x||y|}(y\prec x)\prec\alpha(z) \\
+(-1)^{|x||y|}(y\succ x)\prec\alpha(z)+(-1)^{|x||y|}(y\bullet x)\succ\alpha(z)
-(-1)^{|x||y|}\alpha(y)\prec(x\bullet z) \\
-(-1)^{|x||y|}\alpha(y)\succ(x\prec z)-(-1)^{|x||y|}\alpha(y)\succ(x\succ )z.
\end{multline*}
The left hand side vanishes by using one axiom \eqref{pa3} and twice axiom \eqref{pa6}.
\end{proof}

The Hom-alternative superalgebra $Alt(A)=(A, \bullet, \alpha)$ in Theorem \ref{hl} is called the associated Hom-alternative superalgebra of $(A, \prec, \succ, \alpha)$.
We call $(A, \prec, \succ, \alpha)$ a compatible Hom-prealternative superalgebra
structure on the Hom-alternative superalgebra $Alt(A)$.

Theorem \ref{hj}, Theorem \ref{hk} and Theorem \ref{hl} yield the following corollary.
\begin{corollary}
 Let $(A, \prec, \succ, \alpha)$ be a multiplicative Hom-prealternative superalgebra. Then $(A, \ast, \alpha)$ is a  multiplicative
 Hom-Jordan superalgebra with
$$x\ast y=x\prec y+x\succ y+ (-1)^{|x||y|}y\prec x+(-1)^{|x||y|}y\succ x.$$
\end{corollary}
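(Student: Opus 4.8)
The plan is to realize $(A,\ast,\alpha)$ as the plus-algebra of the associated Hom-alternative superalgebra, so that the corollary falls out of the three cited theorems with essentially no new computation. First I would invoke Theorem \ref{hl} to produce the associated Hom-alternative superalgebra $Alt(A)=(A,\circ,\alpha)$ with $x\circ y=x\prec y+x\succ y$. The key preliminary observation is that multiplicativity is inherited by the associated product: since $(A,\prec,\succ,\alpha)$ is multiplicative, $\alpha$ is a morphism for both $\prec$ and $\succ$, and therefore
\begin{eqnarray}
\alpha(x\circ y)=\alpha(x\prec y)+\alpha(x\succ y)=\alpha(x)\prec\alpha(y)+\alpha(x)\succ\alpha(y)=\alpha(x)\circ\alpha(y),\nonumber
\end{eqnarray}
so that $Alt(A)$ is a \emph{multiplicative} Hom-alternative superalgebra.

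Next I would apply Theorem \ref{hj} directly to $Alt(A)$. That theorem asserts that the Hom-superalgebra $(A,\ast,\alpha)$ with $x\ast y=x\circ y+(-1)^{xy}y\circ x$ is a multiplicative Hom-Jordan superalgebra; in particular $\ast$ is super-commutative and satisfies the Hom-Jordan super-identity of Definition \ref{hjd}. All the substantive work — verifying that identity from the Hom-alternative axioms — is already carried out in the proof of Theorem \ref{hj}, which is why nothing of that kind remains to be checked here.

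It then only remains to match the two expressions for the product. Substituting $x\circ y=x\prec y+x\succ y$ into $x\ast y=x\circ y+(-1)^{xy}y\circ x$ yields
\begin{eqnarray}
x\ast y=x\prec y+x\succ y+(-1)^{xy}y\prec x+(-1)^{xy}y\succ x,\nonumber
\end{eqnarray}
which is exactly the formula in the statement. Equivalently, using Theorem \ref{hk} together with the remark that $Alt(A')=Alt(A)^{op}$, i.e. $x\circ' y=(-1)^{xy}y\circ x$, one may write $x\ast y=x\circ y+x\circ' y$ and expand each summand through its defining operations $\prec,\succ$ and $\prec',\succ'$ to reach the same conclusion.

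The only point demanding genuine care is the bookkeeping of the Koszul signs when expanding $(-1)^{xy}y\circ x$ in terms of $\prec$ and $\succ$; beyond that the corollary is a formal consequence of the three theorems, the real content being delegated to Theorem \ref{hl} (prealternative $\Rightarrow$ alternative) and Theorem \ref{hj} (alternative $\Rightarrow$ Jordan). Hence no independent verification of the Hom-Jordan super-identity is required.
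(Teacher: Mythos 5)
Your proposal is correct and follows the same route as the paper, which derives the corollary directly from Theorems \ref{hj}, \ref{hk} and \ref{hl} without further computation: pass to $Alt(A)=(A,\circ,\alpha)$ via Theorem \ref{hl}, apply the Hom-Jordan admissibility of Theorem \ref{hj}, and expand $x\ast y=x\circ y+(-1)^{xy}y\circ x$ in terms of $\prec$ and $\succ$. Your explicit check that multiplicativity of $\alpha$ passes to the product $\circ$ is a detail the paper leaves implicit, and it is a worthwhile addition since Theorem \ref{hj} requires $Alt(A)$ to be multiplicative.
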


Let us define the notion of $\mathcal{O}$-operator for Hom-alternative superalgebras.
\begin{definition}
 Let $(V, L, R, \beta)$ be a bimodule of the Hom-alternative superalgebra $(A, \bullet, \alpha)$. An even linear map $T: V\rightarrow A$ is called
an $\mathcal{O}$-operator associated to $(V, L, R, \beta)$ if for any $u, v\in V$,
 \begin{eqnarray}
  T(u)\bullet T(v)&=& T(L(T(u))v+R(T(v))u), \\
  T\circ\beta &=& \alpha\circ T.
 \end{eqnarray}
\end{definition}

\begin{theorem}
 Let $T: V\rightarrow A$ be  an $\mathcal{O}$-operator of the Hom-alternative superalgebra $(A, \bullet, \alpha)$ associated to the bimodule
 $(V, L, R, \beta)$.  Then $(V, \prec, \succ, \beta)$ is a Hom-prealternative superalgebra structure, where for all $u, v\in V$,
$$u\prec v=R(T(v))u\quad\mbox{and}\quad u\succ v=L(T(u))v.$$
Therefore, $(V, \bullet=\prec+\succ, \beta)$ is the associated Hom-alternative superalgebra of this Hom-prealternative superalgebra,
and $T$ is a homomorphism of Hom-alternative superalgebras.
Furthermore, $T(V)=\{T(v), v\in V\}\subseteq A$ is a Hom-alternative subalgebra of $(A, \bullet, \alpha)$, and $(T(V), \prec, \succ, \alpha)$ is a
Hom-prealternative superalgebra, where for all $u, v\in V$,
$$T(u)\prec T(v)=T(u\prec v)\quad\mbox{and}\quad T(u)\succ T(v)=T(u\succ v).$$
The associated Hom-alternative superalgebra $(T(V), \bullet=\prec+\succ, \alpha)$ is just the Hom-alternative subalgebra structure of
$(A, \bullet, \alpha)$, and $T$ is a homomorphism of Hom-prealternative superalgebras.
\end{theorem}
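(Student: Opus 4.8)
The plan is to use the two defining properties of the $O$-operator as a dictionary that converts each Hom-prealternative axiom of Definition \ref{pad} for $(V,\prec,\succ,\beta)$ into one of the bimodule axioms (\ref{abm1})--(\ref{abm4}). The first step is to record the single most useful consequence of the $O$-operator condition: since $u\circ v=u\prec v+u\succ v=R(T(v))u+L(T(u))v$, the defining identity $T(u)\circ T(v)=T(L(T(u))v+R(T(v))u)$ gives
\[
T(u\circ v)=T(u)\circ T(v),
\]
while $T\circ\beta=\alpha\circ T$ lets me move the twist across $T$. Together these let me rewrite any expression built from $\prec,\succ,\beta$ on $V$, after introducing $a:=T(u)$, $b:=T(v)$, $c:=T(w)$, purely in terms of the operators $L,R$, the product $\circ$ of $A$, and the twist $\alpha$.

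Next I would verify the four axioms. For the mixed axioms I substitute $u\prec v=R(T(v))u$ and $u\succ v=L(T(u))v$, replace $T\beta$ by $\alpha T$ and $T(x\circ y)$ by $T(x)\circ T(y)$, and pass to $a,b,c$. A direct computation then identifies (\ref{pa3}) with (\ref{abm1}) and (\ref{pa4}) with (\ref{abm2}), each up to relabeling of the algebra arguments and an overall Koszul sign coming from reordering. For the ``diagonal'' axioms (\ref{pa1}) and (\ref{pa2}) it is cleaner to work with their polarized forms (\ref{pa5}) and (\ref{pa6}) quoted in the Remark; the same substitution identifies (\ref{pa5}) with (\ref{abm3}) and (\ref{pa6}) with (\ref{abm4}). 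Hence all four axioms hold precisely because $(V,L,R,\beta)$ is a bimodule, and $(V,\prec,\succ,\beta)$ is a Hom-prealternative superalgebra.

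Once this is established, Theorem \ref{hl} immediately yields that $(V,\circ=\prec+\succ,\beta)$ is the associated Hom-alternative superalgebra, and the relations $T(u\circ v)=T(u)\circ T(v)$ together with $T\beta=\alpha T$ say exactly that $T$ is a homomorphism of Hom-alternative superalgebras. The image $T(V)$ is closed under $\circ$ (by $T(u)\circ T(v)=T(u\circ v)$) and under $\alpha$ (by $\alpha(T(v))=T(\beta(v))$), so it is a Hom-alternative subalgebra of $(A,\circ,\alpha)$. To transport the prealternative structure I set $T(u)\prec T(v)=T(u\prec v)$ and $T(u)\succ T(v)=T(u\succ v)$; each Hom-prealternative axiom for $(T(V),\prec,\succ,\alpha)$ is then obtained by applying $T$ to the corresponding axiom already proved in $(V,\prec,\succ,\beta)$, after using $\alpha T=T\beta$ and $T(x\circ y)=T(x)\circ T(y)$ to pull $T$ outside the whole expression. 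Finally $T(u)\circ T(v)=T(u\prec v)+T(u\succ v)=T(u\circ v)$ shows that the associated Hom-alternative structure on $T(V)$ coincides with the subalgebra structure inherited from $A$, and $T$ is a homomorphism of Hom-prealternative superalgebras by the very definition of $\prec,\succ$ on $T(V)$.

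The routine but error-prone part is the sign bookkeeping: matching (\ref{pa3})--(\ref{pa6}) with (\ref{abm1})--(\ref{abm4}) requires tracking the Koszul signs $(-1)^{uv}$ and $(-1)^{vw}$, the parities transferred through $T$ (recall $a=T(u)$ carries the parity of $u$), and multiplying the relevant bimodule axiom by the correct overall sign. The one genuinely delicate point is the well-definedness of $\prec,\succ$ on $T(V)$: the value $T(u)\prec T(v)=T(R(T(v))u)$ need not depend only on $T(u)$ and $T(v)$ unless $T$ is injective, so I would either assume $T$ injective or read the operations on $T(V)$ as the push-forward of those on $V$, in which case the axiom verification above is unaffected.
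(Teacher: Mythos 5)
Your proposal follows essentially the same route as the paper: substitute $u\prec v=R(T(v))u$ and $u\succ v=L(T(u))v$, use $T\circ\beta=\alpha\circ T$ and $T(u)\circ T(v)=T(u\circ v)$, and identify each Hom-prealternative axiom with a bimodule axiom --- the paper carries out only the pairing (\ref{pa3})$\leftrightarrow$(\ref{abm1}) explicitly and declares the remaining verifications similar, which is exactly the dictionary you describe, including using the polarized forms (\ref{pa5}) and (\ref{pa6}) against (\ref{abm3}) and (\ref{abm4}). Your closing caveat about the well-definedness of the transported operations on $T(V)$ when $T$ is not injective is a genuine subtlety that the paper passes over in silence, and your remedy (assume $T$ injective, or read the operations on $T(V)$ as push-forwards of those on $V$) is sound.
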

\begin{proof}
 For any homogeneous elements $u, w, w\in V$,
\begin{multline*}
 (u\succ v)\prec\beta(w)-\beta(u)\succ(v\prec w)+ (-1)^{|u||v|}(v\prec u)\prec\beta(w)-\\
 (-1)^{|u||v|}\beta(v)\prec(u\bullet w)\\
=(T(u)v)T\beta(w)-T\beta(u)(vT(w))+(-1)^{|u||v|}(vT(u))T\beta(w)- \\
 (-1)^{|u||v|}\beta(v)T(uT(w)+T(u)w)\\
=(T(u)v)\alpha(T(w))-\alpha(T(u))(vT(w))+ \\
(-1)^{|u||v|}(vT(u))\alpha(T(w))-(-1)^{|u||v|}\beta(v)(T(u)T(w))
=0. \quad(\mbox{by}\; \eqref{abm1})
\end{multline*}
The other identities are checked similarly, and the rest of the proof is easy.
\end{proof}
\begin{definition}
A Hom-alternative Rota-Baxter superalgebra of weight $\lambda$ is a Hom-alternative superalgebra
 $(A, \cdot, \alpha)$ together with an even linear self-map $R : A\rightarrow A$ such that $R\circ\alpha=\alpha\circ R$ and
\begin{equation}
R(x)\cdot R(y) = R\Big(R(x)\cdot y + x \cdot R(y) +\lambda x\cdot y\Big).
\end{equation}
\end{definition}
\begin{corollary}
 Let $(A, \cdot, \alpha)$ be a Hom-alternative superalgebra and $R: A\rightarrow A$ a Rota-Baxter operator of weight $0$ on $A$. Then
\begin{enumerate}[label=\upshape{(\roman*)},left=7pt]
\item
$A_R=(A, \prec, \succ, \alpha)$ is a Hom-prealternative superalgebra, where
$$x\prec y=x\cdot R(y)\quad\mbox{and}\quad x\succ y=R(x)\cdot y,$$
for any homogeneous elements $x, y \in \mathcal{H}(A)$.
\item $(A, \bullet, \alpha)$ is also a Hom-alternative superalgebra with
$$x\bullet y=R(x)\cdot y+x\cdot R(y).$$
\end{enumerate}
\end{corollary}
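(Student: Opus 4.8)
The plan is to recognize the weight-zero Rota-Baxter operator $R$ as an $O$-operator associated to the \emph{adjoint} (regular) bimodule of $(A,\cdot,\alpha)$, and then to obtain both parts directly: part a) from the $O$-operator theorem stated above, and part b) from Theorem \ref{hl}.

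First I would endow $A$ with its adjoint bimodule structure over $(A,\cdot,\alpha)$, taking $V=A$, twisting map $\beta=\alpha$, left action $L(x)v=x\cdot v$ and right action $\mathcal R(x)v=v\cdot x$. The point is to check that the bimodule axioms (\ref{abm1})--(\ref{abm4}) hold, and the mechanism is uniform: after grouping the four terms of each axiom into two Hom-associators $as_\cdot$, every axiom collapses onto a left or right Hom-alternative super-identity of $A$. For example, (\ref{abm1}) becomes $as_\cdot(v,x,y)+(-1)^{xv}as_\cdot(x,v,y)=0$, which is left Hom-alternativity, while (\ref{abm4}) becomes $-\big(as_\cdot(v,x,y)+(-1)^{xy}as_\cdot(v,y,x)\big)=0$, which is right Hom-alternativity; the remaining axioms (\ref{abm2}) and (\ref{abm3}) reduce in exactly the same way. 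Hence $(A,L,\mathcal R,\alpha)$ is a bimodule over $A$.

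Next I would verify that $R$ itself is an $O$-operator for this bimodule. The relation $R\circ\alpha=\alpha\circ R$ is part of the Rota-Baxter hypothesis, so it only remains to specialize the defining $O$-operator identity $T(u)\circ T(v)=T\big(L(T(u))v+\mathcal R(T(v))u\big)$ to $T=R$: it reads $R(u)\cdot R(v)=R\big(R(u)\cdot v+u\cdot R(v)\big)$, which is precisely the weight-zero Rota-Baxter relation. Feeding $T=R$ into the $O$-operator theorem then yields a Hom-prealternative superalgebra $(A,\prec,\succ,\alpha)$ with $u\prec v=\mathcal R(R(v))u=u\cdot R(v)$ and $u\succ v=L(R(u))v=R(u)\cdot v$, which is exactly the structure claimed in part a).

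Finally, part b) is immediate from Theorem \ref{hl}: the associated Hom-alternative superalgebra of $A_R$ carries the product $x\circ y=x\prec y+x\succ y=x\cdot R(y)+R(x)\cdot y$, matching the stated formula. The only genuine computation in the whole argument is the verification that the adjoint representation satisfies (\ref{abm1})--(\ref{abm4}); I expect this to be the main obstacle, but it is entirely routine, since each axiom is a pair of associators that rearranges into the alternativity conditions already assumed for $(A,\cdot,\alpha)$.
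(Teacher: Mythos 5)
Your proposal is correct and is precisely the derivation the paper intends: the corollary is stated without proof immediately after the $O$-operator theorem, so it is meant to follow by taking $V=A$ with the regular bimodule ($L(x)v=x\cdot v$, $\mathcal{R}(x)v=v\cdot x$, $\beta=\alpha$) and $T=R$, exactly as you do, with part b) then given either by Theorem \ref{hl} or by the same $O$-operator theorem's statement about $\circ=\prec+\succ$. Your explicit check that the regular actions satisfy (\ref{abm1})--(\ref{abm4}) via pairing the terms into Hom-associators is accurate (I verified all four reductions) and fills in the one step the paper leaves tacit.
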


\begin{proposition}
 Let $(V, \prec, \succ, \beta)$ be a bimodule over the Hom-alternative superalgebra $(A, \cdot, \alpha)$ and $R: A\rightarrow A$ be a Rota-Baxter
operator of weight $0$ on $A$. Then, $(V, \triangleleft, \triangleright, \beta)$, where
$$v\triangleleft x=v\prec R(x)\quad\mbox{and}\quad x\triangleright y=R(x)\succ v,$$
is a bimodule over $(A, \bullet, \alpha)$.
\end{proposition}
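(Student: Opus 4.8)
The plan is to verify directly the four bimodule axioms (\ref{abm1})--(\ref{abm4}) for the candidate $(V, \triangleleft, \triangleright, \beta)$ over the associated Hom-alternative superalgebra $(A, \circ, \alpha)$, where $x\circ y=R(x)\cdot y+x\cdot R(y)$ as in the preceding corollary. The guiding idea is that every axiom for the new structure, once the definitions $v\triangleleft x=v\prec R(x)$ and $x\triangleright v=R(x)\succ v$ are inserted, should collapse onto the corresponding axiom for the original bimodule $(V, \prec, \succ, \beta)$ over $(A, \cdot, \alpha)$, but evaluated at $R(x)$ and $R(y)$ in place of $x$ and $y$.

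Two facts make this collapse happen: the compatibility $R\circ\alpha=\alpha\circ R$ and the weight-$0$ Rota-Baxter identity. First I would record that, since $R$ is even and commutes with $\alpha$, one has $R(\alpha(x))=\alpha(R(x))$ and the parity of $R(x)$ equals that of $x$, so no parity sign is altered by replacing $x$ with $R(x)$. The decisive computation is that
\begin{eqnarray}
R(x\circ y)=R\big(R(x)\cdot y+x\cdot R(y)\big)=R(x)\cdot R(y),\nonumber
\end{eqnarray}
which is exactly the weight-$0$ Rota-Baxter relation. This is what lets a term involving the new product $x\circ y$ sitting inside an action be rewritten through the original product $\cdot$ evaluated at $R(x), R(y)$.

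With these two facts in hand, I would treat the four axioms one at a time. Writing $X=R(x)$ and $Y=R(y)$ for brevity, each term of the new axiom turns into the matching term of the old one: for instance, in the analogue of (\ref{abm1}) the term $(v\triangleleft x)\triangleleft\alpha(y)$ becomes $(v\prec X)\prec\alpha(Y)$, the term $(x\triangleright v)\triangleleft\alpha(y)$ becomes $(X\succ v)\prec\alpha(Y)$, and the term $\beta(v)\triangleleft(x\circ y)$ becomes $\beta(v)\prec(X\cdot Y)$ by the displayed identity. The resulting expression is precisely (\ref{abm1}) with $x, y$ replaced by $X, Y$, hence vanishes. The analogues of (\ref{abm2}), (\ref{abm3}) and (\ref{abm4}) reduce in the same mechanical fashion to (\ref{abm2}), (\ref{abm3}) and (\ref{abm4}) evaluated at $X, Y$.

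I expect no genuine obstacle, only bookkeeping. The two points requiring care are that the parity signs $(-1)^{xv}$ and $(-1)^{xy}$ are unchanged under $x\mapsto R(x)$ (guaranteed by evenness of $R$), and that every occurrence of $\circ$ nested inside a left or right action is converted via $R(x\circ y)=R(x)\cdot R(y)$ rather than by naive expansion of $\circ$. Once these are observed, the verification is a routine term-matching against the already-established axioms (\ref{abm1})--(\ref{abm4}), so I would write out (\ref{abm1}) in full and remark that the remaining three follow verbatim.
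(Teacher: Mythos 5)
Your proposal is correct and follows essentially the same route as the paper: insert the definitions $v\triangleleft x=v\prec R(x)$, $x\triangleright v=R(x)\succ v$, use $R\circ\alpha=\alpha\circ R$ together with the weight-$0$ identity $R\bigl(R(x)\cdot y+x\cdot R(y)\bigr)=R(x)\cdot R(y)$, and observe that each new axiom is the original bimodule axiom evaluated at $R(x)$, $R(y)$. The paper likewise writes out only the (\ref{abm1})-type computation in full and dismisses the remaining axioms as analogous, exactly as you propose.
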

\begin{proof}
 For any homogeneous elements $x, y\in A$ and $v\in V$,
  \begin{align*}
(v\triangleleft x)\triangleleft \alpha(y)&-\beta(v)\triangleleft(x\bullet y)
\\
&=(v\prec R(x))\prec R(\alpha(y))-\beta(v)\prec R(R(x)\cdot y+x\cdot R(y)) \\
&= (v\prec R(x))\prec \alpha(R(y))-\beta(v)\prec (R(x)\cdot R(y)) \\
& =(-1)^{|x||y|}\Big( (R(x)\succ v)\prec\alpha(y)-\alpha(x)\succ(v\prec y)\Big) \\
& \stackrel{\eqref{abm1}}{=}(-1)^{|x||y|}\Big((x\triangleright v)\triangleleft\alpha(y)-\alpha(x)\triangleright(v\triangleleft y)\Big).
  \end{align*}
The other identities are proved similarly.
\end{proof}

\begin{theorem}
  Let $(A, \prec, \succ, \alpha)$ be a Hom-prealternative superalgebra, and let $\beta : A\rightarrow A$ be an even Hom-prealternative superalgebra endomorphism. Then
  $A_\beta=(A, \prec_\beta=\beta\circ\prec, \succ_\beta=\beta\circ\succ, \beta\alpha)$ is a Hom-prealternative superalgebra.
Moreover, suppose that $(A', \prec', \succ')$ is another prealternative superalgebra and $\alpha': A\rightarrow A'$ be a prealternative
superalgebra endomorphism. If $f : A\rightarrow A'$ is a prealternative superalgebra morphism that satisfies $f\circ\beta=\alpha'\circ f$, then
$$f :  (A, \prec_\beta=\beta\circ\prec, \succ_\beta=\beta\circ\succ, \beta\alpha)\rightarrow
(A', \prec_{\alpha'}'=\alpha'\circ\prec', \succ_{\alpha'}'=\alpha'\circ\succ', \alpha')$$
is a morphism of Hom-prealternative superalgebras.
\end{theorem}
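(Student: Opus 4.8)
The plan is to run the standard Yau-type twisting argument and show that each defining axiom (\ref{pa1})--(\ref{pa4}) for the new quadruple $A_\beta$ is merely $\beta^2$ applied to the corresponding axiom of $(A,\prec,\succ,\alpha)$, and therefore vanishes. The inputs I rely on are exactly that $\beta$ is an \emph{even} endomorphism: it preserves the $\mathbb{Z}_2$-grading (so every Koszul sign $(-1)^{xy}$ is left untouched), it commutes with $\alpha$, and it satisfies $\beta(x\prec y)=\beta(x)\prec\beta(y)$ together with $\beta(x\succ y)=\beta(x)\succ\beta(y)$. First I would record two bookkeeping facts: by linearity $x\circ_\beta y=\beta(x\circ y)$, since $\circ_\beta=\prec_\beta+\succ_\beta=\beta\circ\circ$; and the new twisting map $\beta\alpha$ is again even and linear, so $A_\beta$ is at least a well-formed Hom-superalgebra datum.

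For the core computation I would treat (\ref{pa3}) as the representative case. Expanding a typical summand, $\succ_\beta=\beta\circ\succ$ and $\prec_\beta=\beta\circ\prec$ give $(x\succ_\beta y)\prec_\beta(\beta\alpha)(z)=\beta\big(\beta(x\succ y)\prec\beta(\alpha(z))\big)$, and the endomorphism property then pulls $\beta$ across the product to yield $\beta^2\big((x\succ y)\prec\alpha(z)\big)$. Applying the same reduction to each of the four summands, the whole left-hand side of (\ref{pa3}) for $A_\beta$ becomes $\beta^2$ of the expression $(x\succ y)\prec\alpha(z)-\alpha(x)\succ(y\prec z)+(-1)^{xy}(y\prec x)\prec\alpha(z)-(-1)^{xy}\alpha(y)\prec(x\circ z)$, which is zero by (\ref{pa3}) for $A$. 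Since $\beta$ is even the sign $(-1)^{xy}$ rides along unchanged, so no sign bookkeeping is disturbed. The identities (\ref{pa1}), (\ref{pa2}) and (\ref{pa4}) yield to the identical ``factor out $\beta^2$, invoke the original axiom'' pattern, so I would state them and suppress the repetitive algebra.

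For the \emph{Moreover} part I would verify the three conditions for $f:A_\beta\to A'_{\alpha'}$ to be a morphism of Hom-prealternative superalgebras. Compatibility with $\prec$ is immediate: $f(x\prec_\beta y)=f\big(\beta(x\prec y)\big)=(\alpha'\circ f)(x\prec y)=\alpha'\big(f(x)\prec' f(y)\big)=f(x)\prec_{\alpha'}'f(y)$, where I have used successively the definition of $\prec_\beta$, the hypothesis $f\circ\beta=\alpha'\circ f$, that $f$ preserves $\prec'$, and the definition $\prec_{\alpha'}'=\alpha'\circ\prec'$; the statement for $\succ$ is word-for-word the same with $\succ$ replacing $\prec$. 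For the twisting maps I would establish $\alpha'\circ f=f\circ(\beta\alpha)$ by rewriting $f\circ\beta\alpha=(f\circ\beta)\circ\alpha=(\alpha'\circ f)\circ\alpha=\alpha'\circ(f\circ\alpha)$ and then invoking the intertwining of $f$ with $\alpha$.

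The argument is mechanical and presents no deep obstacle; the one point that genuinely requires care is the last one, the twisting-map relation $\alpha'\circ f=f\circ\beta\alpha$. This step uses more than product-preservation: it needs both the hypothesis $f\circ\beta=\alpha'\circ f$ and the compatibility of $f$ with the twisting maps (so that $\alpha'\circ(f\circ\alpha)$ reduces to $\alpha'\circ f$). I would therefore make explicit that ``$f$ is a prealternative superalgebra morphism'' is to be read as including this intertwining with $\alpha$; granting that, every remaining verification is just pulling the even endomorphisms $\beta$ and $\alpha'$ through the two products.
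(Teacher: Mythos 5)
Your proof is correct and follows essentially the same route as the paper's. For the first part, the paper also verifies a single representative axiom (the combination from (\ref{pa3})), uses multiplicativity of $\beta$ to factor everything as $\beta^2$ applied to the untwisted expression, invokes the original axiom, and declares the remaining axioms ``proved similarly''; for the morphism part it runs exactly your composition chain, $f\circ\prec_\beta=f\circ\beta\circ\prec=\alpha'\circ f\circ\prec=\alpha'\circ\prec'\circ(f\otimes f)=\prec'_{\alpha'}\circ(f\otimes f)$, with the analogous computation for $\succ$.

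The one place you go beyond the paper is the twisting-map condition $\alpha'\circ f=f\circ(\beta\alpha)$: the paper's proof never checks it at all, stopping after the two product compatibilities. Your caution there is warranted. As you observe, $f\circ\beta\alpha=(\alpha'\circ f)\circ\alpha$ reduces to $\alpha'\circ f$ only if $f\circ\alpha=f$, which holds precisely when one reads the second part as the statement intends --- $(A',\prec',\succ')$ is an \emph{untwisted} prealternative superalgebra (twisting map the identity), so that ``morphism'' includes intertwining $\alpha$ with $\mathrm{id}_{A'}$; equivalently, in that untwisted reading the source twist should simply be $\beta$ rather than $\beta\alpha$. So your explicit flag exposes an imprecision in the theorem's statement (which the paper's own proof silently skirts), not a gap in your argument.
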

\begin{proof}
For all $x, y, z\in\mathcal{H}(A)$,
\begin{align*}
&(x\succ_\beta y)\prec_\beta\beta\alpha(z)-\beta\alpha(x)\succ_\beta(y\prec_\beta z)  \\
& \qquad = \beta((\beta(x)\succ\beta(y)))\prec\beta(\alpha(z))-\beta(\alpha(x))\succ\beta[(\beta(y)\prec\beta(z))
\\
& \qquad=\beta^2 \Big((x\succ y)\prec\alpha(z)-\alpha(x)\succ(y\prec z)\Big) \\
& \qquad=(-1)^{|x||y|}\beta^2\Big(\alpha(y)\prec(x\bullet z)-(-1)^{|x||y|}(y\prec x)\prec\alpha(z)\Big) \\
& \qquad=(-1)^{|x||y|}\beta\Big(\beta\alpha(y)\prec\beta(x\bullet z)-(-1)^{|x||y|}\beta(y\prec x)\prec\beta\alpha(z)\Big) \\
& \qquad=(-1)^{|x||y|}\beta\Big(\beta\alpha(y)\prec(x\bullet_\beta z)-(-1)^{|x||y|}(y\prec_\beta x)\prec\beta\alpha(z)\Big) \\
& \qquad=(-1)^{|x||y|}\Big(\beta^2\alpha(y)\prec\beta(x\bullet_\beta z)-(-1)^{|x||y|}\beta(y\prec_\beta x)\prec\beta^2\alpha(z)\Big) \\
& \qquad=(-1)^{|x||y|}\beta\alpha(y)\prec_\beta(x\bullet_\beta z)-(-1)^{|x||y|}(y\prec_\beta x)\prec_\beta\beta\alpha(z).
  \end{align*}
The other axioms are proved similarly.
For the second part,
\begin{eqnarray*}
f\circ\prec_\alpha\!\!\!\!&=&\!\!\!\!f\circ\alpha\circ\prec=\alpha'\circ f\circ\prec
=\alpha'\circ\prec'\circ(f\otimes f)=\prec'_{\alpha'}\circ(f\otimes f).
\end{eqnarray*}
Analogues equalities hold for $\succ_\alpha$ and $\succ'_{\alpha'}$.
\end{proof}
Taking $\beta=\alpha^{2^n-1}$ yields the following result.
\begin{corollary}
Let $(A, \prec, \succ, \alpha)$ be a multiplicative Hom-prealternative superalgebra. Then,
\begin{enumerate}[label=\upshape{(\roman*)},left=2pt]
\item For $n\geq 0$,
$A^n=(A, \prec^{(n)}=\alpha^{2^n-1}\circ\prec, \succ^{(n)}=\alpha^{2^n-1}\circ\succ, \alpha^{2^n})$
is a multiplicative Hom-prealternative superalgebra, called the $n\mathrm{th}$ derived multiplicative Hom-prealternative superalgebra.
\item For $n\geq0$,
$A^n=(A, \prec^{(n)}=\alpha^{2^n-1}\circ(\prec+\succ), \alpha^{2^n})$
is a multiplicative Hom-alternative superalgebra, called the $n\mathrm{th}$ derived multiplicative Hom-alternative superalgebra.
\end{enumerate}
\end{corollary}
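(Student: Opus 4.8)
The plan is to deduce both parts from the two results already established, namely the preceding twisting theorem (the one producing $A_\beta=(A,\beta\circ\prec,\beta\circ\succ,\beta\alpha)$ from an even Hom-prealternative endomorphism $\beta$) together with Theorem \ref{hl}. The corollary is essentially bookkeeping on the exponents of $\alpha$, so no new structural super-identity needs to be verified.

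For part a), the first step is to observe that because $(A,\prec,\succ,\alpha)$ is multiplicative, $\alpha$ is itself an even endomorphism of the Hom-prealternative superalgebra, and therefore so is every iterate $\alpha^k$; in particular $\beta:=\alpha^{2^n-1}$ qualifies as the endomorphism required by the preceding theorem. Applying that theorem with this choice of $\beta$ gives at once that
$$A_\beta=(A,\ \alpha^{2^n-1}\circ\prec,\ \alpha^{2^n-1}\circ\succ,\ \alpha^{2^n-1}\alpha)=(A,\prec^{(n)},\succ^{(n)},\alpha^{2^n})$$
is a Hom-prealternative superalgebra. It then remains only to check multiplicativity, i.e. that $\alpha^{2^n}$ respects $\prec^{(n)}$ and $\succ^{(n)}$. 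I would verify this by the computation $\alpha^{2^n}(x\prec^{(n)}y)=\alpha^{2^{n+1}-1}(x\prec y)$ and, using the multiplicativity of $\alpha$ iterated $2^n$ times, $\alpha^{2^n}(x)\prec^{(n)}\alpha^{2^n}(y)=\alpha^{2^n-1}\big(\alpha^{2^n}(x\prec y)\big)=\alpha^{2^{n+1}-1}(x\prec y)$, and identically for $\succ^{(n)}$; the two sides agree.

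For part b), I would simply feed the derived algebra $A^n=(A,\prec^{(n)},\succ^{(n)},\alpha^{2^n})$ from part a) into Theorem \ref{hl}. That theorem then yields the associated Hom-alternative superalgebra $(A,\circ^{(n)},\alpha^{2^n})$ with
$$x\circ^{(n)}y=x\prec^{(n)}y+x\succ^{(n)}y=\alpha^{2^n-1}(x\prec y)+\alpha^{2^n-1}(x\succ y)=\alpha^{2^n-1}\circ(\prec+\succ)(x,y),$$
which is exactly the product announced in the statement; multiplicativity of $\alpha^{2^n}$ carries over verbatim from part a).

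Since every structural super-identity has already been absorbed into the preceding theorem and into Theorem \ref{hl}, the only genuine thing to get right is the exponent arithmetic --- keeping $2^n-1$, $2^n$ and $2^{n+1}-1$ straight when iterating the multiplicativity condition --- so the ``hard part'' here is clerical rather than conceptual.
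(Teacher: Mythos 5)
Your proposal is correct and follows essentially the same route as the paper, which obtains the corollary precisely by setting $\beta=\alpha^{2^n-1}$ in the preceding twisting theorem (with part b) then immediate from Theorem \ref{hl}). Your added verification that $\alpha^{2^n}$ is multiplicative with respect to $\prec^{(n)}$ and $\succ^{(n)}$ is a small but worthwhile supplement, since the paper leaves that exponent check implicit.
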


\subsection{Bimodules of Hom-prealternative superalgebras}
\begin{definition}
 Let $(A, \prec, \succ, \alpha)$ be a Hom-prealternative superalgebra.
An $A$-bimodule is a supervector space $V$ with an even linear map $\beta : V\rightarrow V$ and four even linear maps
 \begin{eqnarray*}
 L_\succ: A&\rightarrow&gl(V)\qquad\qquad \quad\qquad L_\prec : A\rightarrow gl(V) \\
x&\mapsto&L_\succ(x)(v)=x\succ v, \hspace{1,4cm} x\mapsto L_\prec(x)(v)= x\prec v, \\
R_\succ: A&\rightarrow& gl(V) \qquad\qquad\quad\qquad R_\prec : A\rightarrow gl(V) \\
x&\mapsto& R_\succ(x)(v)= v\succ x, \hspace{1,4cm} x\mapsto R_\prec(x)(v)= v\prec x,
 \end{eqnarray*}
satisfying the following relations:
\begin{align}
&\begin{array}{l}
L_\succ(x\bullet y+(-1)^{|x||y|}y\bullet x)\beta(v)=\\
\qquad L_\succ(\alpha(x))L_\succ(y)+(-1)^{|x||y|}L_\succ(\alpha(y))L_\succ(x),
\end{array} \label{pabm1} \\ &
\begin{array}{l}
R_\succ(\alpha(y))(L_\bullet(x)+(-1)^{|x||v|}R_\bullet(x))v= \\
\qquad L_\succ(\alpha(x))R_\succ(y)v+(-1)^{|x||v|}R_\succ(x\succ y)\beta(v),
\end{array} \\
&
\begin{array}{l}
R_\prec(\alpha(y))L_\succ(x)+(-1)^{|x||v|}R_\prec(\alpha(y))R_\prec(x)= \\
\qquad L_\succ(\alpha(x))R_\prec(y)+(-1)^{|x||v|}R_\prec(x\circ y)\beta(v),
\end{array} \\
&
\begin{array}{l}
R_\prec(\alpha(y))R_\succ(x)v+(-1)^{|x||v|}R_\succ(\alpha(y))L_\prec(x)v= \\
\qquad L_\prec(\alpha(x))R_\bullet(y)v+(-1)^{|x||v|}R_\succ(x\bullet y)\beta(v),
\end{array} \\
&
\begin{array}{l}
L_\prec(y\prec x)\beta(v)+(-1)^{|x||y|}L_\prec(x\succ y)\beta(v)= \\
\qquad L_\prec(\alpha(y))L_\bullet(x)v+(-1)^{|x||y|}L_\succ(\alpha(y))L_\succ(x)v,
\end{array} \\
&
\begin{array}{l}
R_\prec(\alpha(x))L_\succ(y)+(-1)^{|x||v|}L_\succ(y\succ x)\beta(v)= \\
\qquad L_\succ(y)R_\prec(x)v+(-1)^{|x||v|}L_\succ(\alpha(y))L_\succ(x)v,
\end{array} \\
&
\begin{array}{l}
R_\prec(\alpha(x))R_\succ(y)v+(-1)^{|x||y|}R_\succ(\alpha(y))R_\bullet(x)v= \\
\qquad R_\succ(y\prec x)\beta(v)+(-1)^{|x||y|}R_\succ(x\succ y)\beta(v),
\end{array} \\
&
\begin{array}{l}
L_\prec(y\succ x)\beta(v)+(-1)^{|x||v|}R_\succ(\alpha(x))L_\bullet(y)v= \\
\qquad L_\succ(\alpha(y))L_\prec(x)v+(-1)^{|x||v|}L_\succ(\alpha(y))R_\succ(y)v,
\end{array} \\
&
\begin{array}{l}
R_\prec(\alpha(x))R_\prec(y)v+(-1)^{|x||y|}R_\prec(\alpha(y))R_\prec(x)v= \\
\qquad R_\prec(x\bullet y+(-1)^{|x||y|}y\bullet x)\beta(v),
\end{array}
\\
&
\begin{array}{l}
R_\prec(\alpha(y))L_\prec(x)+(-1)^{|y||v|}L_\prec(x\prec y)\beta(v)= \\
\qquad L_\prec(\alpha(x))(R_\bullet(y)+(-1)^{|y||v|}L_\bullet(y))v,
\end{array}
\label{pabm10}
\end{align}
where $gl(V)$ is the set of even linear maps of $V$ onto $V$, $\bullet=\prec+\succ$ and
$$x\bullet y=x\prec y+x\succ y,\quad  L_\bullet=L_\succ+L_\prec, \quad  R_\bullet=R_\succ+R_\prec$$
for any homogeneous $x, y, v$.
\end{definition}
\begin{remark}
Axioms \eqref{pabm1}-\eqref{pabm10} are respectively equivalent to
\begin{align}
& \begin{array}{l}
(x\bullet y+(-1)^{|x||y|}y\bullet x)\succ\beta(v)=\\
\qquad \alpha(x)\succ(y\succ v)+(-1)^{|x||y|}\alpha(y)\succ(x\succ v),
\end{array}
\label{pbm1}\\
& \begin{array}{l}
(x\bullet v+(-1)^{|x||v|}v\bullet x)\succ\alpha(y) = \\
\qquad \alpha(x)\succ(v\succ y)-(-1)^{|x||v|}\beta(v)\succ(x\succ y),
\end{array}
\label{pbm2}\\
& \begin{array}{l}
(v\prec x)\prec\alpha(y)+(-1)^{|x||v|}(x\succ v)\prec\alpha(y) = \\
\qquad \beta(v)\prec(x\bullet y)+(-1)^{|x||v|} \alpha(x)\succ(v\prec y),
\end{array}
\label{pbm3}\\
& \begin{array}{l}
(x\prec v)\prec\alpha(y)+(-1)^{|x||v|}(v\succ x)\prec\alpha(y) = \\
\qquad \alpha(x)\prec(v\bullet y)+(-1)^{|x||v|} \beta(v)\succ(x\bullet y),
\end{array}
\label{pbm4}\\
& \begin{array}{l}
(y\prec x)\prec\beta(v)+(-1)^{|x||y|}(x\succ y)\prec\beta(v)= \\
\qquad \alpha(y)\prec(x\bullet v)+(-1)^{|x||y|}\alpha(x)\succ(y\prec v),
\end{array} \label{pbm5}\\
& \begin{array}{l}
(y\succ v)\prec\alpha(x)+(-1)^{|x||v|}(y\bullet x)\succ\beta(v)= \\
\qquad \alpha(y)\succ(v\prec x)+(-1)^{|x||v|}\alpha(y)\succ(x\succ v),
\end{array} \label{pbm6}\\
& \begin{array}{l}
(v\succ)\prec\alpha(x)+(-1)^{|x||y|}(v\bullet x)\succ\alpha(y)= \\
\qquad \beta(v)\succ(y\prec x)+(-1)^{|x||y|}\beta(v)\succ(x\succ y),
\end{array} \label{pbm7}\\
& \begin{array}{l}
(y\succ x)\prec\beta(v)+(-1)^{|x||v|}(y\bullet v)\succ\alpha(x)= \\
\qquad \alpha(y)\succ(x\prec v)+(-1)^{|x||v|}\alpha(y)\succ(v\succ x),
\end{array} \label{pbm8}\\
& \begin{array}{l}
(v\prec x)\prec\alpha(y)+(-1)^{|x||y|}(v\prec y)\prec\alpha(x)= \\
\qquad \beta(v)\prec(x\bullet y+(-1)^{|x||y|}y\bullet x),
\end{array}
\label{pbm9}\\
& \begin{array}{l}
(x\prec v)\prec\alpha(y)+(-1)^{|y||v|}(x\prec y)\prec\beta(v)= \\
\qquad \alpha(x)\prec(v\bullet y+(-1)^{|y||v|}y\bullet v).
\end{array}
\label{pbm10}
\end{align}
\end{remark}
\begin{proposition}
  Let $(A, \prec, \succ, \alpha)$ be a Hom-prealternative superalgebra. \\
  Then $(A, l_\succ, r_\prec, \alpha)$ is a bimodule of the associated
Hom-alternative superalgebra $Alt(A)=(A, \bullet, \alpha)$.
\end{proposition}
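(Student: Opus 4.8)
The plan is to verify directly the four alternative-bimodule identities (\ref{abm1})--(\ref{abm4}) for the regular (adjoint) structure in which the underlying superspace is $A$, the twisting map is $\beta=\alpha$, the left action is $l_\succ(x)v=x\succ v$ and the right action is $r_\prec(x)v=v\prec x$, while the acting algebra is the associated Hom-alternative superalgebra $Alt(A)$ with product $x\circ y=x\succ y+x\prec y$ furnished by Theorem~\ref{hl}. After these substitutions one always reads $\circ$ as $\succ+\prec$ wherever the algebra product occurs, and reads $\succ,\prec$ as the half-products wherever the actions occur.

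The driving observation is that, under these substitutions, each bimodule axiom is literally one of the prealternative axioms, up to a relabeling of the three homogeneous arguments and, in two cases, a global Koszul sign. Concretely, substituting $(x,y,z)\mapsto(x,v,y)$ into (\ref{pa3}) and multiplying through by $(-1)^{xv}$ reproduces (\ref{abm1}) summand by summand; substituting $(x,y,z)\mapsto(y,v,x)$ into (\ref{pa4}) and multiplying by $-1$ yields (\ref{abm2}), using $(-1)^{vx}=(-1)^{xv}$; setting $z=v$ in the linearized left identity (\ref{pa5}) gives (\ref{abm3}) after merely reordering terms; and substituting $(x,y,z)\mapsto(v,x,y)$ into the linearized right identity (\ref{pa6}) and multiplying by $-1$ gives (\ref{abm4}). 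Since the base field has characteristic different from $2$, replacing (\ref{pa1}) and (\ref{pa2}) by their polarizations (\ref{pa5}) and (\ref{pa6}) is legitimate, as recorded in the Remark stating those equivalent forms.

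The only real work is the bookkeeping of the Koszul signs through each relabeling, together with the consistent interpretation of $\circ$ and of the two half-products in the inner and outer positions of each identity. I expect no genuine obstacle here: the content of the proposition is exactly that the four defining identities of a Hom-prealternative superalgebra were arranged so that the half-products turn $A$ into a regular bimodule over $Alt(A)$. Assembling the four equivalences above therefore completes the verification.
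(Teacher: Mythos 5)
Your argument is correct: I checked all four substitutions and signs, and indeed (\ref{pa3}) at $(x,y,z)\mapsto(x,v,y)$ scaled by $(-1)^{xv}$ reproduces (\ref{abm1}) term by term, (\ref{pa4}) at $(y,v,x)$ scaled by $-1$ gives (\ref{abm2}), (\ref{pa5}) at $z=v$ is (\ref{abm3}) up to reordering, and (\ref{pa6}) at $(v,x,y)$ scaled by $-1$ gives (\ref{abm4}), with your appeal to the Remark (and characteristic $\neq 2$) for passing from (\ref{pa1})--(\ref{pa2}) to the linearized forms (\ref{pa5})--(\ref{pa6}) being exactly what the paper intends. The paper in fact states this proposition with no proof at all --- it is the regular-representation special case of the later theorem on bimodules, whose part (i) is likewise dispatched by merely citing (\ref{pbm1}), (\ref{pbm3}), (\ref{pbm6}) and (\ref{pbm9}) --- so your direct verification correctly supplies precisely the routine check the paper leaves implicit, by the same (and essentially the only) route.
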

\begin{proposition}
 Let $(V, \prec, \succ, \beta)$ be a bimodule over the Hom-alternative superalgebra $(A, \bullet, \alpha)$ and $R: A\rightarrow A$ be a Rota-Baxter
operator on $A$. Then $(V, 0, \triangleright, 0, \triangleleft, \beta)$,
with $x\triangleright v=R(x)\succ v$ and $v\triangleleft x=v\prec R(x)$,
is a bimodule over the Hom-prealternative superalgebra $A_R=(A, \prec, \succ, \alpha)$.
\end{proposition}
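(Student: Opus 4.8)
The plan is to check the proposed data against the ten defining relations of a bimodule over the Hom-prealternative superalgebra $A_R$. First I would make the four structure maps explicit: reading off $(V,0,\triangleright,0,\triangleleft,\beta)$, the left $\succ$-action is $x\triangleright v=R(x)\succ v$, the right $\prec$-action is $v\triangleleft x=v\prec R(x)$, and the left $\prec$-action and the right $\succ$-action are both zero. Hence on $V$ one has $L_\circ=L_\succ$ and $R_\circ=R_\prec$ identically, and the only nonzero operators $L_\succ$ and $R_\prec$ are obtained from the original $A$-actions $\succ,\prec$ on $V$ by precomposing the algebra argument with $R$. Writing $\circ$ for the original Hom-alternative product on $A$, recall that $A_R$ carries the products $x\succ y=R(x)\circ y$ and $x\prec y=x\circ R(y)$, so its associated Hom-alternative product is $x\star y:=R(x)\circ y+x\circ R(y)$.

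The two tools I would use throughout are the weight-zero Rota-Baxter identity, rewritten as $R(x\star y)=R(x)\circ R(y)$, and the commutation $R\circ\alpha=\alpha\circ R$. Together these convert every expression in which $R$ is applied to a $\star$-product into a genuine product $R(x)\circ R(y)$ of $R$-images, and let me slide $R$ freely past $\alpha$. With these in hand the verification splits into two kinds of relations. Any relation all of whose terms factor through the vanishing actions $L_\prec$ or $R_\succ$ collapses to $0=0$ and requires nothing. For each surviving relation I substitute the two nonzero actions, collapse every occurrence of $R(\,\cdot\,\star\,\cdot\,)$ to $R(x)\circ R(y)$, and push $R$ through $\alpha$; what remains is exactly one of the Hom-alternative bimodule axioms (\ref{abm1})--(\ref{abm4}) evaluated at the arguments $(R(x),R(y))$. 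Concretely, the relations phrased purely with $L_\succ$ reduce to (\ref{abm3}), those phrased purely with $R_\prec$ reduce to (\ref{abm4}), and the genuinely mixed relations (one $L_\succ$ and one $R_\prec$) reduce to (\ref{abm1}) and (\ref{abm2}). For instance the mixed relation becomes, after substitution, $(v\prec R(x))\prec\alpha(R(y))+(-1)^{xv}(R(x)\succ v)\prec\alpha(R(y))=\beta(v)\prec(R(x)\circ R(y))+(-1)^{xv}\alpha(R(x))\succ(v\prec R(y))$, which is precisely (\ref{abm1}) read at $(R(x),R(y))$.

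I expect the main obstacle to be organisational rather than conceptual: keeping the Koszul signs $(-1)^{xy}$ and $(-1)^{xv}$ consistent while pairing each prealternative relation with the correct alternative axiom and argument order, since a careless pairing yields the right monomials with the wrong sign. Because $R$ acts only on the algebra factor and commutes with $\alpha$, no compatibility between $R$ and the module twist $\beta$ is needed, and $\beta$ is simply carried along unchanged. As in the proof of the preceding proposition, I would display one representative reduction in full and note that the remaining relations are settled by the same substitution-and-reduction.
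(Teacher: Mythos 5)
Your proposal is correct and follows essentially the same route as the paper's proof: substitute the twisted actions, slide $R$ past $\alpha$, use the weight-zero Rota--Baxter identity to rewrite $R$ applied to the $A_R$-product as $R(x)\circ R(y)$, and recognize each surviving prealternative bimodule relation as one of the Hom-alternative bimodule axioms (\ref{abm1})--(\ref{abm4}) evaluated at $(R(x),R(y))$. The representative reduction you display is precisely the single identity the paper works out in full (you even carry the Koszul sign $(-1)^{xv}$ that the paper's display drops), with the remaining relations handled, as there, by the same substitution-and-reduction.
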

\begin{proof}
For any homogeneous elements $x, y\in A$ and $v\in V$,
 \begin{align*}
&(v\triangleleft x)\triangleleft\alpha(y)+(x\triangleright v)\triangleleft\alpha(y) \\
& \qquad =(v\prec R(x))\prec R(\alpha(y))+(R(x)\succ v)\prec R(\alpha(y))\\
& \qquad =(v\prec R(x))\prec \alpha(R(y))+(R(x)\succ v)\prec \alpha(R(y))\\
& \qquad =\beta(v)\prec(R(x)\cdot R(y))+\alpha(R(x))\succ(v\prec R(y))\\
& \qquad =\beta(v)\prec R(R(x)\cdot y+ x\cdot R(y))+R(\alpha(x))\succ(v\prec R(y))\\
& \qquad =\beta(v)\triangleleft (R(x)\cdot y+ x\cdot R(y))+\alpha(x)\triangleright(v\triangleleft y)\\
& \qquad =\beta(v)\triangleleft (x\succ y+ x\prec y)+\alpha(x)\triangleright(v\triangleleft y)\\
& \qquad =\beta(v)\triangleleft (x\cdot y)+\alpha(x)\triangleright(v\triangleleft y).
 \end{align*}
The other axioms are proved in the same way.
\end{proof}

\begin{theorem} \label{thm:bimodHomaltsup}
 Let $(V, L_\prec, R_\prec, L_\succ, R_\succ, \beta)$ be a bimodule over the Hom-pre\-alternative superalgebra
$(A, \prec, \succ, \alpha)$ and $Alt(A)=(A, \bullet, \alpha)$ the associated Hom-alternative superalgebra. Then
\begin{enumerate}[label=\upshape{(\roman*)},left=2pt]
 \item \label{i:thm:bimodHomaltsup}
$(V, L_\succ, R_\prec, \beta)$ is a bimodule over $Alt(A)$.
\item \label{ii:thm:bimodHomaltsup}
$(V, L_\bullet=L_\prec+L_\succ, R_\bullet=R_\prec+R_\succ, \beta)$ is a bimodule over $Alt(A)$.
\item \label{iii:thm:bimodHomaltsup}
If $(V, L, R, \beta)$ is a bimodule of $Alt(A)$, then $(V, 0, R, L, 0, \beta)$ is a bimodule over $(A, \prec, \succ, \alpha)$.
\end{enumerate}
\end{theorem}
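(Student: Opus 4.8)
The plan is to handle the three parts separately, in each case reducing the claim to the bimodule axioms already in hand: (\ref{abm1})--(\ref{abm4}) for Hom-alternative superalgebras and (\ref{pbm1})--(\ref{pbm10}) for Hom-prealternative superalgebras.

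For part~(i) the proof is a direct matching of identities. Taking the left action of $Alt(A)$ to be $\succ=L_\succ$, the right action to be $\prec=R_\prec$, and $x\circ y=x\succ y+x\prec y$ the product of $Alt(A)$, the four Hom-alternative bimodule identities (\ref{abm1}), (\ref{abm2}), (\ref{abm3}), (\ref{abm4}) are literally (up to an overall sign) the Hom-prealternative bimodule identities (\ref{pbm3}), (\ref{pbm6}), (\ref{pbm1}), (\ref{pbm9}). As the latter hold by hypothesis, nothing beyond transposing terms across the equalities is required.

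For part~(iii) I would substitute $L_\prec=0$, $R_\succ=0$, $L_\succ=L$, $R_\prec=R$ into each of (\ref{pbm1})--(\ref{pbm10}). Under this substitution the four identities (\ref{pbm1}), (\ref{pbm3}), (\ref{pbm6}), (\ref{pbm9}) collapse precisely to the Hom-alternative bimodule identities (\ref{abm3}), (\ref{abm1}), (\ref{abm2}), (\ref{abm4}) for $(V,L,R,\beta)$, which hold by assumption. The remaining six identities (\ref{pbm2}), (\ref{pbm4}), (\ref{pbm5}), (\ref{pbm7}), (\ref{pbm8}), (\ref{pbm10}) are satisfied trivially: after the substitution every monomial in each of them contains a factor $L_\prec$ or $R_\succ$, so both sides reduce to $0$, as one verifies term by term.

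Part~(ii) is the computational core and the main obstacle. Now the actions of $Alt(A)$ are the totals $L_\circ=L_\succ+L_\prec$ and $R_\circ=R_\succ+R_\prec$, so each of (\ref{abm1})--(\ref{abm4}) must be checked after expanding every $\succ$ and $\prec$ into its two components, producing many monomials. I would treat one identity of $Alt(A)$ at a time, expand the total actions, and recognise the resulting expression as a sum of the defining identities (\ref{pbm1})--(\ref{pbm10}), applied after the relabellings $x\leftrightarrow y$ that move the variables into the required positions; for example the monomials built from two $\succ$-actions are absorbed by (\ref{pbm1}), while the mixed and two-$\prec$ monomials call on several of (\ref{pbm3})--(\ref{pbm10}). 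The only delicate issue is bookkeeping: keeping the Koszul signs $(-1)^{xy}$ and $(-1)^{xv}$ aligned as the defining identities are summed and the variables permuted. A cleaner conceptual alternative is to note that the \emph{split null extension} $A\oplus V$ is a Hom-prealternative superalgebra exactly because $(V,L_\prec,R_\prec,L_\succ,R_\succ,\beta)$ is a bimodule; its associated Hom-alternative superalgebra, which is alternative by Theorem~\ref{hl}, is $Alt(A)\oplus V$ carrying the total action, and this exhibits $(V,L_\circ,R_\circ,\beta)$ as a bimodule over $Alt(A)$ without the explicit sign computation.
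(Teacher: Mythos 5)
Your proposal is correct and follows essentially the same route as the paper: part (i) is the same four-axiom matching (the paper cites exactly (\ref{pbm1}), (\ref{pbm3}), (\ref{pbm6}), (\ref{pbm9})), part (ii) is the same expansion of the total actions $L_\circ, R_\circ$ into combinations of the defining identities with the relabelling $x\leftrightarrow y$ (the paper derives (\ref{abm3}) from (\ref{pbm1}) together with two instances of (\ref{pbm5}), precisely as you describe), and part (iii) is the substitution argument the paper dismisses as clear, which you verify in more detail. The split-null-extension alternative you sketch for (ii) would indeed be conceptually cleaner, but be aware it rests on the equivalence between the bimodule axioms (\ref{pbm1})--(\ref{pbm10}) and the Hom-prealternativity of $A\oplus V$, which the paper never establishes and which would itself require essentially the same sign bookkeeping you are trying to avoid.
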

\begin{proof}
For any homogeneous elements $x, y\in A$ and $v\in V$,
\begin{enumerate}[label=\upshape{\roman*)},left=0pt]
\item The statement \ref{i:thm:bimodHomaltsup} follows from axioms \eqref{pbm1}, \eqref{pbm3}, \eqref{pbm6} and \eqref{pbm9}.
\item For \ref{ii:thm:bimodHomaltsup}, the axiom \eqref{abm3} is verified as follows,
\begin{align*}
&(x\bullet y)\bullet\beta(v)+ (-1)^{|x||y|} (y\bullet x)\bullet\beta(v) \\
& \quad-\alpha(x)\bullet(y\bullet v)-(-1)^{|x||y|}\alpha(y)\bullet(x\bullet v)\\
&=(x\prec y)\prec\beta(v)+(x\succ y)\prec\beta(v)+(x\bullet y)\succ\beta(v)\\
&\quad +(-1)^{|x||y|}(y\prec x)\prec\beta(v) +(-1)^{|x||y|}(y\succ x)\prec\beta(v)\\
& \hspace{6cm} +(-1)^{|x||y|}(x\bullet y)\succ\beta(v)\\
& \quad -\alpha(x)\prec(y\bullet v)-\alpha(x)\succ(y\prec v)-\alpha(x)\succ(y\succ v)\\
& \quad -(-1)^{|x||y|}\alpha(y)\prec(x\bullet v)-(-1)^{|x||y|}\alpha(y)\succ(x\prec v)\\
& \hspace{6cm} -(-1)^{|x||y|}\alpha(y)\succ(x\succ v).
\end{align*}
The left hand side vanishes by axioms \eqref{pbm1} and \eqref{pbm5}. The other axioms are verified analogously: axiom \eqref{abm4} comes from axioms \eqref{pbm7} and \eqref{pbm9}; axiom \eqref{abm2} comes from axioms \eqref{pbm6}, \eqref{pbm8} and \eqref{pbm10}; axiom \eqref{abm1} comes from axioms \eqref{pbm2}, \eqref{pbm3} and \eqref{pbm4}.
\item It suffices to take $R_\succ=0$ and $L_\prec=0$.
\qedhere
\end{enumerate}
\end{proof}
\begin{theorem}
  Let $(V, L_\prec, R_\prec, L_\succ, R_\succ, \beta)$ be a bimodule over the multiplicative Hom-prealternative superalgebra
$(A, \prec, \succ, \alpha)$, and let $Alt(A)=(A, \bullet, \alpha)$ be the associated Hom-alternative superalgebra.
Then both
$(V, L_\succ^\alpha, R_\prec^\alpha, \beta)$ and  $(V, L_\bullet^\alpha=L_\prec^\alpha+L_\succ^\alpha, R_\bullet^\alpha=R_\prec^\alpha+R_\succ^\alpha, \beta)$ are bimodules over $Alt({A})$, where
\begin{align*}
& L_\prec^\alpha=L_\prec\circ(\alpha^2\otimes Id), \quad L_\succ^\alpha=L_\succ\circ(\alpha^2\otimes Id), \\
& R_\prec^\alpha=R_\prec\circ(\alpha^2\otimes Id), \quad R_\succ^\alpha=R_\succ\circ(\alpha^2\otimes Id).
\end{align*}
\end{theorem}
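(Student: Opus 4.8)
The plan is to avoid verifying the ten bimodule axioms \eqref{pbm1}--\eqref{pbm10} directly for the twisted data, and instead to obtain the statement by chaining together two results already proved above. The key observation is that $L_\succ^\alpha, R_\prec^\alpha$ and their $\circ$-analogues are precisely the twists $L\mapsto L\circ(\alpha^2\otimes Id)$ applied to the $Alt(A)$-bimodule structures furnished by the preceding theorem; so the whole content reduces to invoking the twisting theorem for Hom-alternative bimodules.

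First I would record that $Alt(A)=(A,\circ,\alpha)$ is a \emph{multiplicative} Hom-alternative superalgebra. Indeed, since $\alpha$ is an endomorphism of the Hom-prealternative structure, for homogeneous $x,y$ one has
$$\alpha(x\circ y)=\alpha(x\prec y)+\alpha(x\succ y)=\alpha(x)\prec\alpha(y)+\alpha(x)\succ\alpha(y)=\alpha(x)\circ\alpha(y),$$
so $\alpha$ is an endomorphism of $\circ$ as well. This is exactly the hypothesis required to apply the twisting theorem (the result stating that bimodules over multiplicative Hom-alternative superalgebras are closed under twisting).

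Next I would invoke parts i) and ii) of the preceding theorem, which give that both $(V,L_\succ,R_\prec,\beta)$ and $(V,L_\circ,R_\circ,\beta)$ are bimodules over the multiplicative Hom-alternative superalgebra $Alt(A)$. Applying the twisting theorem to the first of these yields immediately that $(V,L_\succ^\alpha,R_\prec^\alpha,\beta)$ is a bimodule over $Alt(A)$. Applying it to the second yields that $\big(V,\,L_\circ\circ(\alpha^2\otimes Id),\,R_\circ\circ(\alpha^2\otimes Id),\,\beta\big)$ is a bimodule over $Alt(A)$.

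Finally I would close the argument with the elementary identity
$$L_\circ\circ(\alpha^2\otimes Id)=(L_\prec+L_\succ)\circ(\alpha^2\otimes Id)=L_\prec^\alpha+L_\succ^\alpha=L_\circ^\alpha,$$
and likewise $R_\circ\circ(\alpha^2\otimes Id)=R_\prec^\alpha+R_\succ^\alpha=R_\circ^\alpha$, so that the twisted $\circ$-bimodule produced above is exactly $(V,L_\circ^\alpha,R_\circ^\alpha,\beta)$. There is no genuine computational obstacle here: the entire argument is the bookkeeping that twisting by $\alpha^2\otimes Id$ commutes with the decomposition $\circ=\prec+\succ$, together with the two cited theorems. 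The only point deserving a line of care is the verification of multiplicativity of $Alt(A)$, since the twisting theorem is stated solely for multiplicative Hom-alternative superalgebras; once that is in place, both assertions follow at once.
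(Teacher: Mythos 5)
Your proposal is correct, but it takes a genuinely different route from the paper's. The paper proves the theorem by direct verification: it checks the sample axiom (\ref{abm3}) for the twisted action by hand, using multiplicativity of $\alpha$ to rewrite $\alpha^2(x\circ y+(-1)^{xy}y\circ x)$ as $\alpha^2(x)\circ\alpha^2(y)+(-1)^{xy}\alpha^2(y)\circ\alpha^2(x)$ and then applying the original bimodule relation (\ref{pbm1}) to the shifted arguments $\alpha^2(x),\alpha^2(y)$, after which it declares the remaining axioms similar. You instead factor the statement through two results already on record: parts i) and ii) of the immediately preceding theorem, which give that $(V,L_\succ,R_\prec,\beta)$ and $(V,L_\circ,R_\circ,\beta)$ are bimodules over $Alt(A)$, followed by the twisting theorem for bimodules over multiplicative Hom-alternative superalgebras (the first theorem of Section 2), with the trivial identity $(L_\prec+L_\succ)\circ(\alpha^2\otimes Id)=L_\prec^\alpha+L_\succ^\alpha$ closing the loop. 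Your explicit check that $Alt(A)$ is multiplicative is exactly the point that needed a line of care, since the twisting theorem is stated only in the multiplicative case, and you handle it correctly. What each approach buys: yours is more modular and avoids duplicated computation --- the paper's direct calculation in effect re-proves a special case of its own twisting theorem; on the other hand, the twisting theorem you invoke is stated in the paper \emph{without} proof, so your argument inherits that gap, whereas the paper's computation is self-contained for the axiom it treats --- though the paper, too, verifies only one of the four Hom-alternative bimodule axioms explicitly and leaves the rest to the reader.
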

\begin{proof}
We only prove \eqref{abm3} in detail, as the other axioms are verified similarly.
Putting $\succ_\alpha=L_\succ^\alpha$, for any homogeneous elements $x, y\in A$ and $v\in V$,
\begin{align*}
&(x\bullet y+ (-1)^{|x||y|}y\bullet x)\succ_\alpha\beta(v)=\alpha^2(x\bullet y+(-1)^{|x||y|}y\bullet x)\succ\beta(v)
\\
& \stackrel{\eqref{BSprealt:multiplicativityalpha}}{=} % \text{\ (by multiplicativity)}
 (\alpha^2(x)\bullet \alpha^2(y)+(-1)^{|x||y|}\alpha^2(y)\bullet \alpha^2(x))\succ\beta(v)  \\
& \stackrel{\eqref{pbm1}}{=}\alpha^3(x)\succ(\alpha^2(y)\succ v)+(-1)^{|x||y|}\alpha^3(y)\succ(\alpha^2(x)\succ v) \\
& = \alpha(x)\succ_\alpha(y\succ_\alpha v)+(-1)^{|x||y|}\alpha(y)\succ_\alpha(x\succ_\alpha v).
\qedflrght \end{align*}
\phantom{\qedhere}
\end{proof}

\section*{Acknowledgments}
Dr. Ibrahima Bakayoko is grateful to the research environment in Mathematics and Applied Mathematics MAM, Division of Mathematics and Physics of the School of Education, Culture and Communication at M{\"a}lardalen University for hospitality and an excellent and inspiring environment for research and research education and cooperation in Mathematics during his visit in Autumn of 2019, which contributed towards expanding research and research education capacity and cooperation development in Africa and impact of the programs in Mathematics between Sweden and countries in Africa supported by Swedish International Development Agency (Sida) and International Program in Mathematical Sciences (IPMS).
Partial support from Swedish Royal Academy of Sciences is also gratefully acknowledged.

%%%%%%%%%%%%%%%%%%%%%%%%%%%%%%%%%%%%%%%%%%%%%%%%%%%%%%%%%%%%%%%%%%%%%%%%%%%%%%%%%%%%%%%%%%%%%%%%%%%%%%%%%%%%%%%%%%%%%%%%55555555555555555555

\end{document}